\newcounter{todocounter}
\newtheorem{theorem}{Theorem}[section]
\newtheorem{lemma}[theorem]{Lemma}
\theoremstyle{definition}
\newtheorem{definition}[theorem]{Definition}
\theoremstyle{remark}
\newtheorem*{remark*}{Remark}
\newtheorem{remark}[theorem]{Remark}
\numberwithin{equation}{section}
\newcommand{\K}{\mathbb{K}}
\newcommand{\Z}{\mathcal{Z}}
\newcommand{\Zo}{{\mathcal{Z}_0}}
\newcommand{\C}{\ensuremath{\mathrm{C}^*}}
\newcommand{\id}{\mathrm{id}}
\newcommand{\tr}{\mathrm{tr}}
\newcommand{\W}{\mathcal{W}}
\newcommand{\bsub}[1]{_{\scalebox{1.1}{$\scriptstyle #1$}}}
\title[A homotopy rigidity theorem for $\Zo$-stable $\C$-algebras]{A homotopy rigidity theorem for $\Zo$-stable $\C$-algebras}
\author[J.\ Castillejos]{Jorge Castillejos}
\address{\hskip-\parindent Jorge Castillejos, Unidad Cuernavaca del Instituto de Matematicas, UNAM, Av. Universidad s/n, 62210 Cuernavaca, Morelos, México}
\email{jorge.castillejos@im.unam.mx}
\author[B.\ Debets]{Baukje Debets} 
\email{}
\author[G.\ Szab\'o]{G\'abor Szab\'o}
\address{\hskip-\parindent G\'abor Szab\'o, Department of Mathematics, KU Leuven, Celestijnenlaan 200B, box 2400, 3001 Leuven, Belgium}
\email{gabor.szabo@kuleuven.be}
\begin{document}
	
\maketitle

\begin{abstract}
We show that two simple, separable, nuclear and $\Zo$-stable $\C$-algebras are isomorphic if they are trace-preservingly homotopy equivalent.
This result does not assume the UCT and can be viewed as a tracial stably projectionless analog of the homotopy rigidity theorem for Kirchberg algebras.  
\end{abstract}

\renewcommand*{\thetheorem}{\Alph{theorem}}

\section*{Introduction}

A milestone in the classification programme of $\C$-algebras states that the class of simple separable nuclear unital and $\Z$-stable $\C$-algebras that satisfy the \emph{Universal Coefficient Theorem} \cite{RS1987} (UCT) are classified by an invariant constructed with $K$-theory and tracial data \cite{Gong2020, Gong2020a, Elliott2024, Tikuisis2017, Castillejos2021, CGSTW23}.
Therefore, determining when a $\C$-algebra satisfies these conditions is essential before one can attempt to apply the classification theorem.
Many criteria exist to detect $\Z$-stability in various contexts, and it is also known that there are many separable simple unital and nuclear \C-algebras that are not $\Z$-stable. 
However, a major open  problem in the field is determining if nuclear $\C$-algebras satisfy the UCT. We refer to \cite{RS1987, Skandalis1988} for a comprehensive description of the UCT.

The work of Tu shows that groupoid $\C$-algebras of \emph{a-T-menable} groupoids (in particular amenable ones) satisfy the UCT (\cite{Tu1999}), and this encompasses large classes of separable nuclear $\C$-algebras.
Nevertheless, all concrete known examples of nuclear $\C$-algebras are seen to satisfy the UCT and there is no apparent candidate for a counterexample (although such examples do exist outside the nuclear setting; see \cite{Skandalis1988}). 

In light of the difficulty of settling the UCT problem, it is natural to seek classification results where the UCT is not needed.
For instance, the classification theorem of Kirchberg and Phillips \cite{KirchbergUnp, Phillips2000, KP2000} for simple nuclear purely infinite \C-algebras has as a major methodological advantage that the classification is initially obtained directly via KK-theory.
The UCT only plays a role when one wants to obtain KK-equivalence from an isomorphism of ordinary K-theory.
In the classification of stably finite \C-algebras, all the available theories utilise the UCT assumption in several substantial intermediate steps.
This could be relaxed recently in a breakthrough article of Schafhauser \cite{Schafhauser2024}, in which an isomorphism theorem was proved for unital simple nuclear $\Z$-stable \C-algebras under the assumption that one starts from an embedding that induces both a $KK$-equivalence and an isomorphism of tracial data. 

In this note, we will focus on classification results without assuming the UCT for simple and stably projectionless nuclear $\C$-algebras.
In this direction, an important result for this note is the classification of KK-contractible (i.e.\ KK-equivalent to $\{0\}$) stably projectionless simple separable and $\Z$-stable $\C$-algebras via their tracial cones and scales \cite[Theorem 7.5]{Elliott2020}.
In particular, this applies to simple separable stably projectionless nuclear $\C$-algebras with traces that absorb tensorially the Razak--Jacelon algebra $\W$ \cite{Razak2002, Jacelon2013}.
Utilizing this result as a cornerstone, we establish a $\Zo$-stable uniqueness theorem for $^*$-homomorphisms between simple separable nuclear $\C$-algebras that are \emph{trace-preservingly homotopic} (see Definition \ref{def:tracial.homotopy} and Theorem \ref{maintheorem}).
Here $\Zo$ is the known stably projectionless analog of the Jiang--Su algebra that plays an important role in the classification of a large class of stably projectionless \C-algebras \cite{GL20}.
For classifiable \C-algebras with the UCT, the assumption of $\Zo$-stability is reflected in the Elliott invariant via the assumption that the pairing map between the traces and the $K_0$-group has to vanish.

By combining our aforementioned uniqueness theorem for maps with an Elliott intertwining argument, we obtain the following rigidity property for the class of separable, simple, nuclear and $\Zo$-stable $\C$-algebras.

\begin{theorem}
	Let $A$ and $B$ be simple, separable, nuclear and $\Zo$-stable $\C$-algebras. If $A$ and $B$ are trace-preservingly homotopy equivalent, then $A$ is isomorphic to $B$.
\end{theorem}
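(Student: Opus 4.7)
The plan is to derive the isomorphism theorem from the $\Zo$-stable uniqueness statement (Theorem \ref{maintheorem}) by a standard two-sided Elliott approximate intertwining argument. The uniqueness theorem carries all of the technical weight; the final step here is essentially formal.

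By hypothesis there exist $^*$-homomorphisms $\varphi : A \to B$ and $\psi : B \to A$ such that $\psi \circ \varphi$ is trace-preservingly homotopic to $\id_A$ and $\varphi \circ \psi$ is trace-preservingly homotopic to $\id_B$, in the sense of Definition \ref{def:tracial.homotopy}. Both compositions are endomorphisms of simple, separable, nuclear, $\Zo$-stable \C-algebras that are trace-preservingly homotopic to the identity, so Theorem \ref{maintheorem} applies to each of the pairs $(\psi \circ \varphi, \id_A)$ and $(\varphi \circ \psi, \id_B)$ and yields that $\psi \circ \varphi$ is approximately unitarily equivalent to $\id_A$ in $A$, and likewise on the $B$-side. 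Since $A$ and $B$ are already $\Zo$-stable, no further tensor absorption is required.

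With the two approximate unitary equivalences in hand, I would run Elliott's two-sided approximate intertwining. Fix increasing finite subsets $F_1 \subset F_2 \subset \cdots$ of $A$ and $G_1 \subset G_2 \subset \cdots$ of $B$ with dense unions, together with a summable sequence of tolerances $\varepsilon_n > 0$. Recursively select unitaries $u_n$ in (the unitisation of) $A$ and $v_n$ in (the unitisation of) $B$ so that, after conjugating $\psi$ and $\varphi$ by the accumulated products of these unitaries, the resulting compositions agree with $\id_A$ and $\id_B$ on $F_n$ and $G_n$ to within $\varepsilon_n$. The Cauchy sequences of conjugated maps then converge pointwise to mutually inverse $^*$-isomorphisms, yielding $A \cong B$.

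The main obstacle of the project is encoded entirely in Theorem \ref{maintheorem}. For the deduction above, one only needs to verify that the uniqueness theorem is formulated with enough flexibility to supply approximate unitary equivalences implementable on any prescribed finite subset with any prescribed tolerance --- the natural form of the notion --- which together with separability is exactly what allows the intertwining recursion to proceed. Some mild care is needed because $A$ and $B$ need not be unital, but this is handled by the standard non-unital version of the Elliott intertwining using approximate units.
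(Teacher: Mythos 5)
Your overall strategy is the right one and matches the paper: apply Theorem~\ref{maintheorem} to the two composites, then run a two-sided Elliott approximate intertwining. But there is a gap in the deduction from Theorem~\ref{maintheorem}. That theorem gives
\[
(\psi\circ\varphi)\otimes\id_{\Zo}\ \approx_{\mathrm u}\ \id_{A\otimes\Zo}
\qquad\text{and}\qquad
(\varphi\circ\psi)\otimes\id_{\Zo}\ \approx_{\mathrm u}\ \id_{B\otimes\Zo},
\]
and \emph{not} the unstabilised statement ``$\psi\circ\varphi\approx_{\mathrm u}\id_A$''. Your justification --- ``since $A$ and $B$ are already $\Zo$-stable, no further tensor absorption is required'' --- does not follow. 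An isomorphism $A\cong A\otimes\Zo$ need not conjugate $\psi\circ\varphi$ onto $(\psi\circ\varphi)\otimes\id_{\Zo}$, and the usual argument that one can cancel a tensor factor (choosing the isomorphism approximately unitarily equivalent to the first-factor embedding $a\mapsto a\otimes 1_D$) is unavailable here since $\Zo$ has no unit. Indeed, for a general self-absorbing non-unital $D$ (e.g.\ $\mathcal{O}_2\otimes\K$) the implication ``$\Theta\otimes\id_D\approx_{\mathrm u}\id_{A\otimes D}\Rightarrow\Theta\approx_{\mathrm u}\id_A$'' fails badly, so this step genuinely needs an argument and is best avoided.

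The repair is simple and is precisely what the paper does: do \emph{not} drop the $\otimes\id_{\Zo}$. Run the Elliott intertwining between $A\otimes\Zo$ and $B\otimes\Zo$ using the maps $\varphi\otimes\id_{\Zo}$ and $\psi\otimes\id_{\Zo}$ and the two approximate unitary equivalences displayed above; this yields $A\otimes\Zo\cong B\otimes\Zo$ (the paper isolates this as Theorem~\ref{theo:class}). Only at the very end invoke $\Zo$-stability, in the form $A\cong A\otimes\Zo$ and $B\cong B\otimes\Zo$, to conclude $A\cong B$. Your remarks about finite sets, tolerances, non-unitality and approximate units in the intertwining are fine as far as they go; they just need to be carried out at the $\Zo$-stabilised level.
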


\subsection*{Acknowledgement.} 
JC was supported by UNAM--PAPIIT IA103124. 
BD was partially supported by European Research Council Consolidator Grant 614195--RIGIDITY.
GS was supported by research project G085020N funded by the Research Foundation Flanders (FWO), and the European Research Council under the European Union's Horizon Europe research and innovation programme (ERC grant AMEN--101124789).
Both BD and GS were partially supported by KU Leuven internal funds projects STG/18/019 and C14/19/088.

\numberwithin{theorem}{section}

\section{Preliminaries}

\subsection{Notation}

We will denote the multiplier algebra of $A$ by $\mathcal{M}(A)$ and its forced unitisation by $A^\dagger$.
If $A$ is unital, the unitary group is denoted by $\mathcal{U}(A)$.
We will write $\mathcal{U}(1+A)$ for the unitary subgroup $(1+A)\cap \mathcal{U}(A^\dagger)$.
Observe that if $A$ is unital, we can canonically identify $\mathcal{U}(A)$ with $\mathcal{U}(1+A)$. 

We will denote the $n\times n$-matrices with complex coefficients by $M_n(\mathbb{C})$.
We denote the standard matrix units by $(e_{ij})_{i,j=1}^n$.
We will also freely identify $M_n(A)$ with $M_n(\mathbb{C}) \otimes A$ whenever it is convenient for us. 
The cone of lower semicontinuous densely defined traces on $A$ is denoted by $T^+(A)$.

We will frequently  write $a \approx_\varepsilon b$ as short-hand for $\|a -b \| \leq \varepsilon$.
For $^*$-homomorphisms $\varphi, \psi: A \to B$, we write $\varphi \approx_{\mathrm{u}} \psi$ to say that they are \emph{approximately unitarily equivalent}, i.e., there is a net of unitaries $(u_\lambda) \subset \mathcal{U}(1+A)$ with $\lim\limits_{\lambda \to \infty} u_\lambda \varphi (a) u_\lambda^* = \psi(a)$ for all $a \in A$.
If one assumes $A$ to be separable, then such nets can be replaced by sequences.

Lastly, we shall say that a separable \C-algebra $A$ is \emph{$KK$-contractible} if $KK(A,A)=0$.

\subsection{Robert's classification theorem}

Given two positive elements $a$ and $b$ in a $\C$-algebra $A$, it is said that $a$ is \emph{Cuntz-below} $b$, $a \precsim b$, if for any $\varepsilon >0$ there is $x\in A$ such that $a \approx_\varepsilon x^*bx$. 
It is said that $a$ is \emph{Cuntz equivalent} to $b$, $a \sim b$, if $a \precsim b$ and $b \precsim a$.
The \emph{Cuntz semigroup} of $A$ is defined as $\mathrm{Cu}(A):= (A\otimes \K)_+ /_\sim$ equipped with orthogonal addition and order given by Cuntz-subequivalence. 
The equivalence class of a given element $a \in (A \otimes \K)_+$ will be denoted by $[a]$.

The \emph{augmented Cuntz semigroup} of a unital $\C$-algebra $A$, denoted by $\mathrm{Cu}^\sim (A)$, is defined  as the ordered semigroup of formal differences $[a] - n [1_A]$, with $a \in \mathrm{Cu}(A)$ and $n \in \mathbb{N}$, i.e.,
\begin{align}
	\mathrm{Cu}^\sim(A):= \{ [a] - n[1_A] \mid [a] \in \mathrm{Cu}(A),\ n \in \mathbb{N} \}. \notag
\end{align}
This set carries an order by declaring that $[a]-n[1_A] \leq [b] - m[1_A]$ holds in $\mathrm{Cu}^\sim(A)$ if there is $k \in \mathbb{N}$ such that $[a]+m[1_A]+k[1_A] \leq [b] + n [1_A] + k [1_A]$ in $\mathrm{Cu}(A)$.

Now assume $A$ is non-unital and $\pi: A^\dagger \to \mathbb{C}$ is the canonical quotient map.
The augmented Cuntz semigroup of $A$ is then defined as the subsemigroup of $\mathrm{Cu}^\sim(A^\dagger)$ given by 
\begin{align}
	\mathrm{Cu}^\sim (A) := \{ [a] - n[1_{A^\dagger} ] \mid [a] \in \mathrm{Cu}({A^\dagger}),\ \mathrm{Cu}(\pi) ([a]) = n \}. \notag
\end{align}
We endow $\mathrm{Cu}^\sim(A)$ with the order coming from $\mathrm{Cu}^\sim (A^\dagger)$.
We refer to \cite{Rob12,RS2021} for more details about this construction.
As we shall see below, the augmented Cuntz semigroup is a powerful tool to classify maps between certain classes of $\C$-algebras. 

When a $\C$-algebra is simple, exact, $\Z$-stable and admits non-trivial traces, $\mathrm{Cu}^\sim(A)$ can be calculated using its K-theory and tracial data (\cite[Theorem 6.11]{RS2021}).
Indeed, if $\overline{\mathbb{R}} := \mathbb{R} \cup \{\infty\}$ and $\mathrm{Lsc}(T^+(A), \overline{\mathbb{R}})$ denotes the lower semicontinuous functions $T^+(A) \to \overline{\mathbb{R}}$ that are linear and map the zero trace to $0$,
one always has a natural isomorphism 
\begin{align}\label{CuSimFormula}
\mathrm{Cu}^\sim (A) \cong K_0(A) \sqcup \mathrm{Lsc}(T^+(A), \overline{\mathbb{R}}).
\end{align}
The ordered semigroup structure on the right hand side is given as follows. 
The sets $K_0(A)$ and $\mathrm{Lsc}(T^+(A), \overline{\mathbb{R}}) $ are considered disjoint and each set is separately endowed with its natural order and addition.
For $x \in K_0(A)$, we consider the function $\hat{x}: T^+(A) \to \mathbb{R}\subset\overline{\mathbb{R}}$ given by evaluation on $x$. 
For any $f \in \mathrm{Lsc}(T^+(A), \overline{\mathbb{R}})$, the addition operation of mixed terms is given via $x+f:= \hat{x} + f \in \mathrm{Lsc}(T^+(A))$.
The order is defined by declaring that $f \leq x$ holds if $f \leq \hat{x}$ in $\mathrm{Lsc}(T^+(A), \overline{\mathbb{R}})$, and $x \leq f$ if there is a strictly positive function $h \in \mathrm{Lsc}(T^+(A), \overline{\mathbb{R}})$ such that $\hat{x}+h = f$.
We refer to \cite[Section 6.3]{RS2021} for more details about the above isomorphism. 

A major tool for this paper is a classification result by Robert that applies to $^*$-homomorphisms  
where the domain is an inductive limit of \emph{1-dimensional noncommutative CW complexes} (henceforth abbreviated as 1-NCCW complexes) with vanishing $K_1$-groups and the codomain has \emph{stable rank one} (i.e., invertible elements are dense in its minimal unitisation).
The class of 1-NCCW complexes was introduced by Eilers--Loring--Pedersen in \cite{Eilers1998}.
These algebras are defined as pullback $\C$-algebras of the form $C([0,1], F) \oplus_{F\oplus F} E$ with the linking morphism $C([0,1], F) \to F \oplus F$ given by $\mathrm{ev}_0 \oplus \mathrm{ev}_1$.

\begin{theorem}[{\cite[Theorem 1.0.1]{Rob12}}]\label{thm:Robert}
	Let $A$ be either a $1$-NCCW complex with trivial $K_1$-group, or a sequential inductive limit of such $\C$-algebras, or a $\C$-algebra stably isomorphic to one such inductive limit.
	Let $B$ be a $\C$-algebra with stable rank one.
	Then for every $\mathrm{Cu}$-morphism
	\begin{equation}
		\alpha: \mathrm{Cu}^\sim(A) \to \mathrm{Cu}^\sim (B) \notag
	\end{equation}
	such that $\alpha([s_A]) \leq [s_B]$, where $s_A \in A_+$ and $s_B \in B_+$ are strictly positive elements, there exists a $^*$-homomorphism
	\begin{equation}
		\varphi: A \to B \notag
	\end{equation}
	such that $\mathrm{Cu}^\sim (\varphi) = \alpha$.
	Moreover, $\varphi$ is unique up to approximate unitary equivalence.
\end{theorem}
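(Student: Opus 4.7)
The plan is to proceed in three stages, handling the three classes of $A$ in turn: a single $1$-NCCW complex, a sequential inductive limit of these, and finally an algebra stably isomorphic to such a limit.

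\textbf{Stage 1 --- $1$-NCCW complexes with $K_1 = 0$.} Write $A$ as a pullback $C([0,1], F) \oplus_{\mathrm{ev}_0 \oplus \mathrm{ev}_1} E$ with $E, F$ finite-dimensional. A $^*$-homomorphism out of $A$ is determined by its action on finitely many positive-element and partial-isometry generators satisfying explicit relations coming from the pullback. For the existence half, I would read off from the $\mathrm{Cu}$-morphism $\alpha$ the prescribed Cuntz classes of the images of these generators, and then realise them concretely inside $B$. Two ingredients drive this translation: stable rank one of $B$ gives cancellation in $\mathrm{Cu}(B)$ and lets Cuntz-subequivalences $a \precsim b$ be implemented as exact equations $a = x^* b x$ via polar decomposition in the minimal unitisation; and $K_1(A) = 0$ removes the unitary obstructions that would otherwise appear when gluing together elements compatibly along the two endpoint evaluations. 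For the uniqueness half, given $\varphi_1, \varphi_2 : A \to B$ with $\mathrm{Cu}^\sim(\varphi_1) = \mathrm{Cu}^\sim(\varphi_2)$, one argues that each pair $(\varphi_1(x), \varphi_2(x))$ on a generator $x$ is approximately unitarily equivalent (again using stable rank one), and then coordinates these conjugations into a single sequence of unitaries implementing $\varphi_1 \approx_{\mathrm{u}} \varphi_2$ globally.

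\textbf{Stage 2 --- sequential inductive limits.} Write $A = \varinjlim(A_n, \iota_n)$ with each $A_n$ as in Stage 1, and let $\alpha_n : \mathrm{Cu}^\sim(A_n) \to \mathrm{Cu}^\sim(B)$ be the induced morphisms. Stage 1 applied to each $\alpha_n$ produces $^*$-homomorphisms $\varphi_n : A_n \to B$ realising $\alpha_n$, and the uniqueness clause of Stage 1 forces $\varphi_{n+1} \circ \iota_n \approx_{\mathrm{u}} \varphi_n$. A standard one-sided approximate intertwining, conjugating each $\varphi_n$ by a suitable unitary and taking tolerances summably small on a fixed exhausting sequence of finite subsets of $A$, yields a pointwise limit $\varphi : A \to B$ with $\mathrm{Cu}^\sim(\varphi) = \alpha$. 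The analogous two-sided intertwining, applied to two such limit maps, yields approximate unitary equivalence at the limit.

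\textbf{Stage 3 --- stable isomorphism.} If $A$ is stably isomorphic to a limit $A'$ as in Stage 2, use the natural identification $\mathrm{Cu}^\sim(A) = \mathrm{Cu}^\sim(A \otimes \K)$ (stability of $\mathrm{Cu}^\sim$) to inflate $\alpha$ to a $\mathrm{Cu}$-morphism $\mathrm{Cu}^\sim(A') \to \mathrm{Cu}^\sim(B \otimes \K)$. Apply Stage 2 to obtain a $^*$-homomorphism $A' \to B \otimes \K$ whose image of a strictly positive element lies under $[s_B]$, so that the image in fact sits inside a hereditary subalgebra isomorphic to a corner of $B$, and transport through the stable isomorphism to recover $\varphi : A \to B$. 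Uniqueness transfers analogously.

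The main obstacle is Stage 1: Stages 2 and 3 are formal bootstrapping on top of existence-and-uniqueness for $1$-NCCW source algebras. The technical heart is turning abstract data in $\mathrm{Cu}^\sim(A)$ --- which, in view of the disjoint-union description $K_0 \sqcup \mathrm{Lsc}(T^+(A), \overline{\mathbb{R}})$ in suitable cases, mixes $K$-theoretic and trace-functional information --- into concrete positive elements, projections and partial isometries in $B$ that are mutually compatible with every relation defining the pullback, and doing this with enough uniformity to make the uniqueness argument go through. Stable rank one of $B$ and $K_1(A) = 0$ are precisely the hypotheses that render this translation tractable.
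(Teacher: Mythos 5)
This theorem is cited in the paper from Robert's 2012 article \cite[Theorem~1.0.1]{Rob12} and is not proved in the present paper, so there is no in-paper proof to compare your attempt against. That said, your three-stage architecture (single $1$-NCCW complex $\to$ sequential inductive limit $\to$ stable case via the scale condition $\alpha([s_A])\leq[s_B]$) does reflect the high-level organisation of Robert's argument, and your remark that Stages~2 and~3 are formal bootstrapping given Stage~1 is fair.

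The gap is precisely where you locate it, but it is larger than ``the main obstacle'' suggests: Stage~1 is essentially the entire content of Robert's paper, involving a detailed structural analysis of $\mathrm{Cu}$ and $\mathrm{Cu}^\sim$ for $1$-NCCW complexes, approximate existence and uniqueness theorems on the building blocks $C([0,1],F)$ and $E$, and a careful treatment of how $K_1$-triviality removes the endpoint-gluing obstruction. ``Read off from the $\mathrm{Cu}$-morphism the prescribed Cuntz classes of the images of the generators and realise them concretely inside $B$'' names the goal rather than gives the argument; this is where the proof actually lives. One concrete misstatement along the way: stable rank one does not let you upgrade $a\precsim b$ to an exact equation $a=x^*bx$. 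What it gives (R{\o}rdam's theorem) is a unitary in the unitisation conjugating $a$ into $\overline{bBb}$, and in practice one must work with $(a-\varepsilon)_+$ rather than $a$ itself, which propagates through and is part of why the existence and uniqueness results are only approximate at each finite stage.
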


We shall henceforth refer to the class of \C-algebras that satisfy the assumptions of the above theorem in place of $A$ as \emph{Robert's class}.
For subsequent applications of the theorem we note that every \C-algebra belonging to Robert's class also has stable rank one.

\subsection{The Razak--Jacelon algebra $\W$ and the $\C$-algebra $\Zo$}\label{section:WandZ0}

Among the class of inductive limits of 1-NCCW complexes with vanishing $K_1$-groups, there are two important examples with remarkable properties that we discuss below.

The \emph{Razak--Jacelon algebra} $\W$ is the unique algebraically simple nuclear stably projectionless $\Z$-stable monotracial and $KK$-contractible $\C$-algebra up to isomorphism. 
It is constructed as an inductive limit of certain 1-NCCW complexes known as Razak building blocks \cite{Razak2002, Jacelon2013}. 
This algebra absorbs tensorially the universal UHF algebra and hence it is also $\Z$-stable. 
Via classification results (\cite[Corollary 6.7]{Elliott2020}), it is known that it is self-absorbing, i.e., $\W \otimes \W \cong \W$.
Furthermore, by the Kirchberg--Phillips classification theorem, one sees also that $\W \otimes \mathcal{O}_\infty \cong \mathcal{O}_2 \otimes \K$.
The $\C$-algebra $\W$ can be regarded as the stably finite analogue of $\mathcal{O}_2$.

The Razak--Jacelon algebra belongs to the broader class of stably projectionless simple nuclear $\Z$-stable $KK$-contractible $\C$-algebras.
This class was classified by Elliott--Gong--Lin--Niu (\cite[Theorem 7.5]{Elliott2020}).
(Note that \cite{CE} guarantees that the assumption of $\Z$-stability agrees with the assumption of finite nuclear dimension appearing in this reference.)
In particular, this applies to simple stably projectionless nuclear $\C$-algebras that absorb tensorially the Razak--Jacelon algebra $\W$.
An important result for this note is that $^*$-homomorphisms between $KK$-contractible $\C$-algebras are classified by their tracial behaviour.

\begin{theorem}[{\cite[Theorem 6.3]{Sza21}, \cite{Elliott2020}}]\label{thm:class.maps.KK0}
	Let $A$ and $B$ be simple separable stably projectionless nuclear $\Z$-stable $\C$-algebras such that $KK(A,A)= KK(B,B)=0$. Let $\psi, \varphi: A \to B$ be $^*$-homomorphisms. Then $\varphi$ and $\psi$ are approximately unitarily equivalent if and only if $\tau \circ \psi = \tau \circ \varphi$ for all $\tau \in T^+(B)$.
\end{theorem}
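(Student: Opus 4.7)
The ``only if'' direction is immediate: for any net $(u_\lambda)$ in $\mathcal{U}(1+B)$ witnessing the approximate unitary equivalence and any $\tau \in T^+(B)$, one has $\tau(u_\lambda \varphi(a) u_\lambda^*) = \tau(\varphi(a))$ for all $\lambda$ and $a \in A_+$, and norm-continuity of $\tau$ on positive elements then gives $\tau \circ \varphi = \tau \circ \psi$. For the converse, my plan is to reduce the problem to Robert's uniqueness theorem (Theorem \ref{thm:Robert}) by checking that $\varphi$ and $\psi$ induce the same morphism on the augmented Cuntz semigroup.

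First I would observe that $KK(A,A)=0$ forces $K_*(A)=0$: since $\mathrm{id}_A = 0$ in $KK(A,A)$ and Kasparov multiplication by $\mathrm{id}_A$ acts as the identity on $KK(\mathbb{C},A)=K_0(A)$ and on $KK(S\mathbb{C},A)=K_1(A)$, both groups vanish, and analogously for $B$. Substituting this into the isomorphism (\ref{CuSimFormula}) gives $\mathrm{Cu}^\sim(A)\cong\mathrm{Lsc}(T^+(A),\overline{\mathbb{R}})$, and similarly for $B$. Concretely, an element of this semigroup is captured by a positive element $a \in (A\otimes\K)_+$ through the dimension function $\tau \mapsto d_\tau(a)$, and a $^*$-homomorphism $\rho : A\to B$ sends this datum to $\tau \mapsto d_\tau(\rho(a)) = d_{\tau\circ\rho}(a)$. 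The hypothesis $\tau\circ\varphi = \tau\circ\psi$ for every $\tau\in T^+(B)$ therefore forces $\mathrm{Cu}^\sim(\varphi) = \mathrm{Cu}^\sim(\psi)$.

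To invoke Theorem \ref{thm:Robert} I still need $A$ to lie in Robert's class. This is supplied by the EGLN classification \cite[Theorem 7.5]{Elliott2020}: under the present hypotheses $A$ is isomorphic to a sequential inductive limit $A'$ of $1$-NCCW complexes with trivial $K_1$. Fixing such an isomorphism $\theta : A'\to A$, the compositions $\varphi\circ\theta,\psi\circ\theta : A'\to B$ induce equal morphisms on $\mathrm{Cu}^\sim$, and $B$ has stable rank one since it is simple, stably projectionless and $\Z$-stable. Robert's theorem then delivers $\varphi\circ\theta \approx_{\mathrm{u}} \psi\circ\theta$, and hence $\varphi \approx_{\mathrm{u}} \psi$. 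The step I expect to be the most delicate is the middle one: tracking carefully, in the non-unital and augmented setting, that (\ref{CuSimFormula}) really makes $\mathrm{Cu}^\sim(\varphi)$ depend only on the pullback of traces $\tau \mapsto \tau\circ\varphi$, which is precisely where the vanishing of $K_*$ is used essentially.
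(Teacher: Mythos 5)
The paper does not supply its own proof of this statement; it is cited directly from \cite[Theorem 6.3]{Sza21} and \cite{Elliott2020}. Your reconstruction via Robert's classification theorem is, however, the standard route to this uniqueness result, and it matches the approach used in the cited sources: $KK(A,A)=0$ forces $K_*(A)=0$ via the Kasparov product, so under the isomorphism \eqref{CuSimFormula} the augmented Cuntz semigroups of $A$ and $B$ are carried entirely by tracial data; the hypothesis $\tau\circ\varphi=\tau\circ\psi$ for all $\tau\in T^+(B)$ then forces $\mathrm{Cu}^\sim(\varphi)=\mathrm{Cu}^\sim(\psi)$; the EGLN classification places $A$ in Robert's class; $B$ has stable rank one being simple, stably projectionless, exact and $\Z$-stable; and Theorem~\ref{thm:Robert} finishes the argument. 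The delicate middle step you flag (that the functorial map on $\mathrm{Cu}^\sim$ is precisely precomposition with $\tau\mapsto\tau\circ\varphi$ on the functional part) is genuinely the content that needs checking, and your instinct to identify it is right.

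One small inaccuracy in the ``only if'' direction: you appeal to ``norm-continuity of $\tau$ on positive elements'', but elements of $T^+(B)$ are only lower semicontinuous, not norm-continuous. The conclusion still holds, but the correct argument applies lower semicontinuity twice: from $u_\lambda\varphi(a)u_\lambda^*\to\psi(a)$ one gets $\tau(\psi(a))\le\liminf_\lambda\tau(u_\lambda\varphi(a)u_\lambda^*)=\tau(\varphi(a))$, and symmetrically $u_\lambda^*\psi(a)u_\lambda\to\varphi(a)$ gives $\tau(\varphi(a))\le\tau(\psi(a))$. This is a minor fix and does not affect the structure of the proof.
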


On the other hand, the $\C$-algebra $\Zo$ is a stably projectionless $\C$-algebra that is also an inductive limit of 1-NCCW complexes with vanishing $K_1$-groups.
It is the unique (up to isomorphism) algebraically simple and monotracial \C-algebra in Robert's class with $K_0(\Zo) = \mathbb{Z}$.
(As it belongs to Robert's class, one automatically has $K_1(\Zo) = 0$.)
It has interesting and useful properties like being self-absorbing and $\Z$-stable (\cite[Definition 8.1, Corollary 13.4]{GL20}) and it can also be regarded as a stably projectionless analog of $\Z$. 
In \cite[Theorem 15.8]{GL20}, it was shown that separable, simple, nuclear and $\Zo$-stable $\C$-algebras satisfying the UCT are classified by the Elliott invariant, which in this case obeys the condition that the pairing between traces and the $K_0$-group has to be trivial.

The $\C$-algebras $\W$ and $\Zo$ both belong to Robert's class.
As an application, one can produce a useful interplay between these algebras by constructing special $^*$-homomorphisms $\Zo \to \W$ and $\W \to \Zo$.
To be more precise, equation \eqref{CuSimFormula} yields that
\begin{equation} \label{eq:aCu-for-Zo-and-W}
\mathrm{Cu}^\sim(\Zo) \cong \mathbb{Z} \sqcup \overline{\mathbb{R}} \quad \text{and} \quad \mathrm{Cu}^\sim(\W) \cong \{0\}\sqcup\overline{\mathbb{R}}.
\end{equation}
One can then define order preserving maps between the augmented Cuntz semigroups in the following way:
the map $\W \to \Zo$ is induced by the natural inclusion $\mathrm{Cu}^\sim(\W) \hookrightarrow \mathrm{Cu}^\sim(\Zo)$ and the $^*$-homomorphism $\Zo \to \W$ is induced by the map that is equal to the identity on $\overline{\mathbb{R}}$ and sends $\mathbb{Z}$ to $0$.
It follows from the construction of these maps that they vanish in $K_0$ and preserve the corresponding tracial state.

\begin{theorem}[{cf.\ \cite[Definition 8.12]{GL20}}]\label{thm:useful.maps}
	There exist unique trace preserving $^*$-homomorphisms $\varphi_{\Zo}: \W \to \Zo$ and $\varphi_{\W}: \Zo \to \W$ (up to approximate unitary equivalence). In particular, 
	\begin{equation}
		K_0(\varphi_{\W}) = 0 \qquad \text{and} \qquad K_0(\varphi_{\Zo})=0. \notag
	\end{equation}
\end{theorem}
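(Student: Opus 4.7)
The plan is to invoke Robert's classification theorem (Theorem~\ref{thm:Robert}) applied to the two explicit $\mathrm{Cu}$-morphisms suggested in the paragraph immediately preceding the statement. Under the identifications in~\eqref{eq:aCu-for-Zo-and-W}, the $\overline{\mathbb{R}}$-summands correspond to $\mathrm{Lsc}(T^+(\cdot),\overline{\mathbb{R}})$ via evaluation at a chosen representative of the unique tracial ray, and the classes $[s_\W]$ and $[s_\Zo]$ of strictly positive elements both correspond to the maximal element $\infty \in \overline{\mathbb{R}}$, since $\W$ and $\Zo$ are simple, algebraically simple and stably projectionless, so their densely defined trace is unbounded on any nonzero positive element.

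First I would verify that the prescribed maps $\alpha_{\Zo}: \mathrm{Cu}^\sim(\W) \to \mathrm{Cu}^\sim(\Zo)$ (the inclusion $\{0\}\sqcup\overline{\mathbb{R}} \hookrightarrow \mathbb{Z}\sqcup\overline{\mathbb{R}}$) and $\alpha_{\W}: \mathrm{Cu}^\sim(\Zo) \to \mathrm{Cu}^\sim(\W)$ (identity on $\overline{\mathbb{R}}$, zero on $\mathbb{Z}$) are genuine $\mathrm{Cu}$-morphisms. Order preservation and preservation of suprema of increasing sequences are immediate. The compatibility of addition across the mixed $K_0/\mathrm{Lsc}$ terms reduces to the observation that the pairing $K_0(\Zo) \times T^+(\Zo) \to \mathbb{R}$ vanishes, which is the tracial footprint of $\Zo$-stability and so certainly applies to $\Zo$ itself; consequently $\hat{n} = 0 \in \mathrm{Lsc}(T^+(\Zo),\overline{\mathbb{R}})$ for every $n \in \mathbb{Z}$, and so $n + f = f$ in $\mathrm{Cu}^\sim(\Zo)$. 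Finally, $\alpha_{\Zo}([s_\W]) = \infty = [s_\Zo]$ and $\alpha_{\W}([s_\Zo]) = \infty = [s_\W]$, so the hypothesis $\alpha([s_A]) \leq [s_B]$ in Theorem~\ref{thm:Robert} is satisfied with equality.

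Both $\W$ and $\Zo$ belong to Robert's class and have stable rank one, so Theorem~\ref{thm:Robert} yields $^*$-homomorphisms $\varphi_{\Zo}: \W \to \Zo$ and $\varphi_{\W}: \Zo \to \W$ realizing $\alpha_{\Zo}$ and $\alpha_{\W}$, each unique up to approximate unitary equivalence among $^*$-homomorphisms inducing the prescribed $\mathrm{Cu}^\sim$-data. Since the $\overline{\mathbb{R}}$-parts of both morphisms are the identity, the resulting maps are trace-preserving; the vanishing of $K_0(\varphi_{\Zo})$ and $K_0(\varphi_{\W})$ is automatic because $K_0(\W) = 0$. To upgrade uniqueness to all trace-preserving $^*$-homomorphisms, I would note that any such map induces a $\mathrm{Cu}^\sim$-morphism that must be the identity on the tracial summand (by trace preservation) and zero on the $K_0$-summand (since either the source or target $K_0$ is trivial), hence coincides with $\alpha_{\Zo}$ or $\alpha_{\W}$; Robert's uniqueness clause then concludes. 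The main subtlety is really the bookkeeping check that the prescribed abstract maps are $\mathrm{Cu}$-morphisms, which crucially uses the vanishing of the $K_0$--trace pairing for $\Zo$; once that is settled, Robert's theorem handles both existence and uniqueness.
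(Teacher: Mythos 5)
Your overall plan is the same as what the paper (following \cite[Definition 8.12]{GL20}) intends: describe the two $\mathrm{Cu}^\sim$-morphisms explicitly via the identifications in \eqref{eq:aCu-for-Zo-and-W}, observe that the vanishing of the $K_0$--trace pairing for $\Zo$ makes the map that kills $\mathbb{Z}$ a genuine $\mathrm{Cu}$-morphism, and then invoke Theorem~\ref{thm:Robert} for both existence and uniqueness. The reduction of uniqueness among \emph{all} trace-preserving maps to the uniqueness clause of Robert's theorem, and the observation that $K_0(\varphi_{\W})=0=K_0(\varphi_{\Zo})$ because $K_0(\W)=0$, are both correct and are what the paper has in mind.

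There is, however, one genuine misstatement in your justification of the scaling hypothesis. You claim that $[s_\W]$ and $[s_\Zo]$ both correspond to the maximal element $\infty\in\overline{\mathbb{R}}$, ``since $\W$ and $\Zo$ are \ldots algebraically simple and stably projectionless, so their densely defined trace is unbounded on any nonzero positive element.'' This is false. Algebraic simplicity implies the Pedersen ideal is the whole algebra, so any densely defined lower semicontinuous trace is finite on every positive element and is in fact a bounded trace; indeed $\W$ and $\Zo$ each have a unique tracial \emph{state}. Consequently, if one normalizes $\mathrm{Lsc}(T^+(\W),\overline{\mathbb{R}})\cong\overline{\mathbb{R}}$ by evaluation at the tracial state $\tau_\W$, then $[s_\W]$ maps to $d_{\tau_\W}(s_\W)=1$, a finite element; the element $\infty$ of $\mathrm{Cu}^\sim(\W)$ is instead the class of a strictly positive element of $\W\otimes\K$. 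The same applies to $\Zo$. Fortunately, this slip does not affect the argument: since $\alpha_{\Zo}$ and $\alpha_{\W}$ are the identity on the $\overline{\mathbb{R}}$-summand and one identifies both copies of $\overline{\mathbb{R}}$ via their respective tracial states, one still gets $\alpha_{\Zo}([s_\W])=1=[s_{\Zo}]$ and $\alpha_{\W}([s_{\Zo}])=1=[s_\W]$, so the hypothesis $\alpha([s_A])\leq[s_B]$ in Theorem~\ref{thm:Robert} is satisfied with equality exactly as you claim. You should simply replace the ``unbounded trace'' justification with this observation.
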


There is another useful automorphism of $\Zo$ that we will use in the next section. This automorphism is obtained from defining a map $\Lambda$ at the level of $\mathrm{Cu}^\sim(\Zo)$ that sends $n$ to $-n$ on $\mathbb{Z}$ and agrees with the identity map on $\overline{\mathbb{R}}$.
By Robert's classification theorem, there exists a $^*$-endomorphism $\tilde{\sigma}: \Zo \to \Zo$ that induces $\Lambda$.
Since by the same theorem one has that $\tilde{\sigma}^2$ is approximately inner, it follows from the Elliott intertwining argument that $\tilde{\sigma}$ is approximately unitarily equivalent to an automorphism; see \cite[Corollary 2.3.4]{Rordam}.

\begin{theorem}[{cf.\ \cite[Definition 8.13]{GL20}}]
	There exists a unique trace preserving automorphism $\sigma: \Zo \to \Zo$ (up to approximate unitary equivalence) such that $K_0(\sigma) = -\mathrm{id}_{K_0(\Zo)}$.
\end{theorem}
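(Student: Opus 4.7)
The plan is to flesh out the construction that the paragraph preceding the statement already outlines. Using the identification $\mathrm{Cu}^\sim(\Zo) \cong \mathbb{Z} \sqcup \overline{\mathbb{R}}$ from \eqref{eq:aCu-for-Zo-and-W}, I would first define a map $\Lambda: \mathrm{Cu}^\sim(\Zo) \to \mathrm{Cu}^\sim(\Zo)$ by $\Lambda(n) = -n$ on the $\mathbb{Z}$-summand and $\Lambda(f) = f$ on the $\overline{\mathbb{R}}$-summand, and check that $\Lambda$ is a $\mathrm{Cu}$-morphism. The order and additivity on each summand individually are immediate: $K_0(\Zo)$ carries the trivial positive cone (since $\Zo$ is stably projectionless), so $-\mathrm{id}$ is order-preserving there, and on $\overline{\mathbb{R}}$ nothing has to be verified. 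The subtle point is compatibility with the mixed sum $n+f = \hat{n}+f$, which works precisely because $\Zo$-stability forces the tracial pairing on $K_0(\Zo)$ to vanish; hence $\hat{n}=0$, and $\Lambda(n+f) = f = \widehat{-n}+f = \Lambda(n)+\Lambda(f)$. Moreover, since $\Zo$ is non-unital with a unique normalized trace, the class $[s_{\Zo}]$ corresponds to the constant function $\infty$ in the $\overline{\mathbb{R}}$-summand and is fixed by $\Lambda$, so the strictly positive element hypothesis of Theorem~\ref{thm:Robert} holds automatically.

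With $\Lambda$ in hand and $\Zo$ belonging to Robert's class, Theorem~\ref{thm:Robert} produces a $^*$-endomorphism $\tilde{\sigma}: \Zo \to \Zo$ with $\mathrm{Cu}^\sim(\tilde{\sigma}) = \Lambda$, unique up to approximate unitary equivalence. Since $\Lambda^2 = \mathrm{id}_{\mathrm{Cu}^\sim(\Zo)}$, the endomorphism $\tilde{\sigma}^2$ induces the same $\mathrm{Cu}^\sim$-morphism as $\mathrm{id}_{\Zo}$; by Robert's uniqueness this gives $\tilde{\sigma}^2 \approx_{\mathrm{u}} \mathrm{id}_{\Zo}$. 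This is exactly the set-up of \cite[Corollary 2.3.4]{Rordam}, which via a standard two-sided Elliott intertwining argument upgrades $\tilde{\sigma}$ to a bona fide automorphism $\sigma: \Zo \to \Zo$ with $\sigma \approx_{\mathrm{u}} \tilde{\sigma}$.

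To finish, reading off the $\mathbb{Z}$-component of $\Lambda$ gives $K_0(\sigma) = -\mathrm{id}_{K_0(\Zo)}$, and the fact that $\Lambda$ acts as the identity on the $\overline{\mathbb{R}}$-summand (which encodes the linear functionals on $T^+(\Zo)$) translates precisely to $\tau \circ \sigma = \tau$ for every $\tau \in T^+(\Zo)$. For uniqueness, any trace-preserving automorphism $\sigma'$ with $K_0(\sigma') = -\mathrm{id}$ necessarily induces the same map $\Lambda$ at the level of $\mathrm{Cu}^\sim(\Zo)$, so Robert's theorem delivers $\sigma' \approx_{\mathrm{u}} \tilde{\sigma} \approx_{\mathrm{u}} \sigma$. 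The only genuine obstacle in the plan is the verification that $\Lambda$ respects the mixed addition in $\mathrm{Cu}^\sim(\Zo)$; this is where $\Zo$-stability enters essentially. Everything else is a direct application of Robert's classification theorem together with the second-iterate lemma.
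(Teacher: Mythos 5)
Your proposal follows the same route the paper outlines in the paragraph preceding the theorem: define $\Lambda$ on $\mathrm{Cu}^\sim(\Zo)\cong\mathbb{Z}\sqcup\overline{\mathbb{R}}$ by $n\mapsto -n$ and the identity on $\overline{\mathbb{R}}$, realize it by an endomorphism $\tilde{\sigma}$ via Robert's Theorem~\ref{thm:Robert}, observe $\tilde\sigma^2\approx_{\mathrm{u}}\mathrm{id}$, and upgrade to an automorphism with \cite[Corollary 2.3.4]{Rordam}, with uniqueness again from Robert's theorem. The added verification that $\Lambda$ respects the mixed sum via $\hat n=0$ (from $\Zo$-stability killing the trace--$K_0$ pairing) is a correct elaboration of a point the paper leaves implicit.
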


We include the statement of the following lemma proved in \cite{GL20}, which is another application of Robert's classification theorem.

\begin{lemma}[{\cite[Lemma 8.14]{GL20}}]\label{lem:GL}
	Consider the $^*$-homomorphisms $\Upsilon, \Omega: \Zo \to M_2(\Zo)$ given by
	\begin{equation}\label{lem:GL.eq1}
		\Upsilon := \mathrm{id}_{\Zo} \oplus \sigma \qquad \text{and} \qquad \Omega := (\varphi_{\Zo} \circ \varphi_{\W}) \otimes 1_{M_2}.
	\end{equation}
	Then $\Upsilon$ is approximately unitarily equivalent to $\Omega$. 
\end{lemma}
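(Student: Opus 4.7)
My approach is to reduce the statement to the uniqueness clause of Robert's theorem (Theorem~\ref{thm:Robert}). Since $\Zo$ belongs to Robert's class and $M_2(\Zo)$, being stably isomorphic to $\Zo$, has stable rank one, any two $^*$-homomorphisms $\Zo \to M_2(\Zo)$ agree up to approximate unitary equivalence as soon as they induce the same morphism $\mathrm{Cu}^\sim(\Zo) \to \mathrm{Cu}^\sim(M_2(\Zo))$. So the whole task is to check $\mathrm{Cu}^\sim(\Upsilon) = \mathrm{Cu}^\sim(\Omega)$. The condition $\mathrm{Cu}^\sim(\Upsilon)([s_\Zo]) \leq [s_{M_2(\Zo)}]$ appearing in the theorem is automatic for $^*$-homomorphisms.

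Both $\mathrm{Cu}^\sim(\Zo)$ and $\mathrm{Cu}^\sim(M_2(\Zo))$ decompose as $\mathbb{Z} \sqcup \overline{\mathbb{R}}$ via \eqref{CuSimFormula}, so the verification splits into a $K_0$-part and a tracial part, which I would handle in turn. On $K_0$, additivity under direct sums and the defining properties of the maps yield $K_0(\Upsilon) = K_0(\id_{\Zo}) + K_0(\sigma) = \id - \id = 0$, while $K_0(\Omega) = 2 \cdot K_0(\varphi_{\Zo} \circ \varphi_{\W}) = 0$ because $K_0(\varphi_{\W}) = 0$ by Theorem~\ref{thm:useful.maps}. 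Hence both maps vanish on the $K_0$-piece.

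For the tracial piece, identify traces on $M_2(\Zo)$ with traces on $\Zo$ via $\tau \mapsto \tau \otimes \Tr$ and use trace preservation of $\sigma$, $\varphi_{\Zo}$ and $\varphi_{\W}$: for $\tau \in T^+(\Zo)$ and $a \in \Zo_+$ one obtains
\begin{equation*}
(\tau \otimes \Tr)(\Upsilon(a)) = \tau(a) + \tau(\sigma(a)) = 2\tau(a) = 2\tau(\varphi_{\Zo}(\varphi_{\W}(a))) = (\tau \otimes \Tr)(\Omega(a)),
\end{equation*}
so both maps act as ``multiplication by $2$'' on the tracial component of $\mathrm{Cu}^\sim$. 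Combining the two computations gives $\mathrm{Cu}^\sim(\Upsilon) = \mathrm{Cu}^\sim(\Omega)$, and Robert's theorem then yields $\Upsilon \approx_{\mathrm{u}} \Omega$.

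There is no serious obstacle in this scheme; the only point worth being careful about is confirming that the identification $\mathrm{Cu}^\sim(M_2(\Zo)) \cong \mathbb{Z} \sqcup \overline{\mathbb{R}}$ is natural enough that the $K_0$-component and the tracial component separately determine the induced Cu-morphism. The conceptual content of the argument is the interplay between the $K_0$-flipping automorphism $\sigma$ and the $K_0$-killing factorisation through $\W$: direct-summing $\id_{\Zo}$ with $\sigma$ cancels the $K_0$-data, matching the effect of routing through $\W$, while trace preservation on both sides makes the tracial data agree.
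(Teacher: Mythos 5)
Your proposal is correct and matches exactly the approach the paper alludes to: the paper imports this lemma from \cite[Lemma~8.14]{GL20} and explicitly flags it as ``another application of Robert's classification theorem,'' which is precisely your scheme of verifying $\mathrm{Cu}^\sim(\Upsilon)=\mathrm{Cu}^\sim(\Omega)$ via the $K_0$-part (both vanish, since $K_0(\sigma)=-\id$ cancels $K_0(\id_\Zo)$ and $K_0(\varphi_\W)=0$) and the tracial part (both act as multiplication by $2$), then invoking the uniqueness clause of Theorem~\ref{thm:Robert}. The same $K_0$-plus-trace computation is what the paper itself carries out in the proof of Lemma~\ref{lem:Gamma}, so your argument is fully aligned with the paper's methodology.
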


\section{The Main result}

In this section we prove the main result of this note. 
We begin by introducing some notation that will be used throughout. 
Given $n \in \mathbb{N}$ and $j \in \{1, 2, \ldots, 2n+1\}$, we consider the $^*$-homomorphism
\begin{align}\label{ji}
	\iota_j^{A}: A \to M_{2n+1}(A) \quad \text{by} \quad \iota_j^A(a) = e_{ii}\otimes a.
\end{align} 
For $j \in \{1, \ldots, 2n\}$, we consider $\kappa_j^{A}: M_2(A) \to M_{2n+1} (A)$ by
\begin{align}\label{ki}
	\kappa_j^{A}  
	\left( \left( a_{k\ell} \right)_{k,\ell=0,1} \right)
	=  \sum_{k,\ell=0,1} e_{j+k,j+\ell} \otimes a_{k\ell}. 
\end{align}     
We will typically omit the superscript $A$ when  it is clear from context which algebra we are referring to. 

We note that by the classification theorem \cite[Theorem 15.8]{Elliott2020}, it follows that $\Zo$ is isomorphic to $M_n (\Zo)$ for all $n \in \mathbb{N}$.
We say that an isomorphism $\Phi: \Zo\to M_n(\Zo)$ is \emph{$K$-positive}, if $K_0(\Phi)=K_0(\iota_1)$.
Note that due to the existence of the aforementioned automorphism $\sigma$ on $\Zo$, $K$-positivity is not automatic in this context.

\begin{lemma}\label{lem:Gamma}
	Let $n \in \mathbb{N}$. 
	Consider the $^*$-homomorphism $\Gamma_n:  \Zo \to M_{2n+1}(\Zo)$ given by 
	\begin{align}\label{Gamman}
	\Gamma_n(a) := 
	\sum\limits_{j=0}^n \iota_{2j+1}(a)+ \sum\limits_{j=1}^n \iota_{2j}(\sigma (a)). \notag
	\end{align}
	Then any $K$-positive isomorphism $\Zo \to M_{2n+1}(\Zo)$ is approximately unitarily equivalent to $\Gamma_n$.
\end{lemma}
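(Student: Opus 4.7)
The plan is to invoke Robert's theorem (Theorem \ref{thm:Robert}). Since $\Zo$ lies in Robert's class and $M_{2n+1}(\Zo) \cong \Zo$ has stable rank one, it suffices to show that $\mathrm{Cu}^\sim(\Phi) = \mathrm{Cu}^\sim(\Gamma_n)$ as Cu-morphisms $\mathrm{Cu}^\sim(\Zo) \to \mathrm{Cu}^\sim(M_{2n+1}(\Zo))$, along with the strict positivity condition $\alpha([s_\Zo]) \leq [s_{M_{2n+1}(\Zo)}]$. The strict positivity condition is automatic for both maps: $\Phi$ is an isomorphism, so it sends strictly positive elements to strictly positive elements; and for a strictly positive $s \in \Zo$, the element $\Gamma_n(s)$ is block-diagonal with blocks either $s$ or $\sigma(s)$, each of which is strictly positive in $\Zo$, so $\Gamma_n(s)$ is strictly positive in $M_{2n+1}(\Zo)$.

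To compare the Cu-morphisms I would use the decomposition $\mathrm{Cu}^\sim(A) \cong K_0(A) \sqcup \mathrm{Lsc}(T^+(A), \overline{\mathbb{R}})$ from \eqref{CuSimFormula}, together with the canonical Morita identification $K_0(M_{2n+1}(\Zo)) \cong K_0(\Zo)$ and the natural identification $T^+(M_{2n+1}(\Zo)) \cong T^+(\Zo)$ given by $\tau \mapsto \Tr \otimes \tau$. On the $K_0$-part, since all corner embeddings $\iota_j$ induce the same map $K_0(\iota_1)$ (unitarily equivalent corners) and $K_0(\sigma) = -\mathrm{id}$, one computes
\begin{equation}
K_0(\Gamma_n) = (n+1)\, K_0(\iota_1) + n\, K_0(\iota_1) \circ K_0(\sigma) = (n+1)\, K_0(\iota_1) - n\, K_0(\iota_1) = K_0(\iota_1), \notag
\end{equation}
which matches $K_0(\Phi) = K_0(\iota_1)$ by the $K$-positivity hypothesis.

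For the tracial part, let $\tau$ denote the normalized trace on $\Zo$ and $\tau' = \Tr \otimes \tau$ the corresponding trace on $M_{2n+1}(\Zo)$. Since $\sigma$ is trace-preserving and $\tau'(e_{ii} \otimes a) = \tau(a)$, one obtains
\begin{equation}
\tau' \circ \Gamma_n = (n+1)\, \tau + n\, (\tau \circ \sigma) = (2n+1)\, \tau, \notag
\end{equation}
so the tracial part of $\mathrm{Cu}^\sim(\Gamma_n)$ is multiplication by $2n+1$. For $\Phi$, the isomorphism property combined with the canonical identification of tracial cones and $K$-positivity likewise should force $\tau' \circ \Phi = (2n+1)\, \tau$. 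With both components of the Cu-morphisms matching, Robert's theorem then concludes $\Phi \approx_{\mathrm{u}} \Gamma_n$. The main obstacle in this plan will be justifying the tracial scaling for $\Phi$: while the factor $2n+1$ is an explicit calculation for $\Gamma_n$, ensuring the same scaling for an arbitrary $K$-positive isomorphism requires carefully using the monotraciality of $\Zo$ and the specific $\C$-algebra structure of the isomorphism (as opposed to just a Cuntz-semigroup level statement), so that no nontrivial rescaling of the trace can survive the $K$-positivity plus isomorphism constraints.
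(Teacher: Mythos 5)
Your approach is essentially identical to the paper's: compute the $K_0$-part and tracial part of $\mathrm{Cu}^\sim(\Gamma_n)$ under the decomposition \eqref{CuSimFormula}, match them against those of a $K$-positive isomorphism $\Phi$, and invoke Robert's theorem. The ``obstacle'' you flag is not one: any $^*$-isomorphism carries tracial states to tracial states, so monotraciality of $\Zo$ and of $M_{2n+1}(\Zo)\cong\Zo$ forces $(\tr_{2n+1}\otimes\tau_{\Zo})\circ\Phi=\tau_{\Zo}$, equivalently $(\Tr\otimes\tau_{\Zo})\circ\Phi=(2n+1)\tau_{\Zo}$ in your unnormalized convention, which matches your computation for $\Gamma_n$; the paper simply works with the normalized trace $\tr_{2n+1}$ on the matrix factor so the scalar $2n+1$ never appears.
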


\begin{proof}
	Notice that $\Gamma_n(a)$ is a matrix of the form
	\begin{equation}
		\Gamma_n(a) = \begin{pmatrix}
			a &  & & &  \\
			& \sigma(a) &  \\
			& & a \\
			& & & \sigma(a) & \\
			& & & & \ddots \\
			& & & & & a
		\end{pmatrix}. \notag
	\end{equation}
	Since $K_0(\iota_k) = K_0(\iota_0)$ for all $k\leq 2n+1$ and $K_0(\sigma) = - \id_{K_0(\Zo)}$, we see that
	\begin{align}
		K_0(\Gamma_n) & =  K_0\left( \sum_{j=0}^n \iota_{2j+1} \circ \id_{\Zo} + \sum_{j=1}^n \iota_{2j}\circ \sigma \right)  \notag \\
		& =  \sum_{j=0}^n K_0(\iota_1) - \sum_{j=1}^n K_0(\iota_1) \notag \\
		& =  K_0(\iota_1). \notag
	\end{align}
	On the other hand, the uniqueness of the tracial states $\tau_{\Zo}$ and $\tr_{2n+1}$ on $\Zo$ and $M_{2n+1}(\mathbb{C})$, respectively, yields that the unique trace on $M_{2n+1}(\Zo)$ is of the form $\tr_{2n+1} \otimes \tau_{\Zo}$. Hence, using that $\sigma$ is a trace-preserving automorphism (i.e.\ $\tau_{\Zo} \circ \sigma = \tau_{\Zo}$), we obtain
	\begin{align}
		(\tr_{2n+1} \otimes \tau_{\Zo})\circ \Gamma_n   &=  \tau_{\Zo}. \notag
	\end{align}
	If we keep in mind \eqref{eq:aCu-for-Zo-and-W}, the above implies that $\mathrm{Cu}^\sim(\Gamma_n) = \mathrm{Cu}^\sim(\Phi)$ for any $K$-positive isomorphism $\Phi: \Zo\to M_{2n+1}(\Zo)$. 
	By Robert's Theorem~\ref{thm:Robert}, $\Gamma_n$ is thus approximately unitarily equivalent to $\Phi$.
\end{proof}

\begin{lemma}\label{lem:Ad(u)}
	Let $A$ be a $\sigma$-unital $\C$-algebra and assume that a unitary $u \in \mathcal{U}(\mathcal{M}(A))$ homotopic to $1_{\mathcal{M}(A)}$.
	Then there exists a sequence $(v_n)_{n\in\mathbb{N}} \subset \mathcal{U}(1+A)$ such that $(\mathrm{Ad}(v_n))_{n\in\mathbb{N}}$ converges to $\mathrm{Ad}(u)$ in the point-norm topology.
\end{lemma}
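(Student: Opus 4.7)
The plan is to write $u$ as a finite product of exponentials of self-adjoint elements of $\mathcal{M}(A)$, approximate each such exponential — in the strict topology — by an exponential whose self-adjoint exponent already lies in $A$, and finally to assemble the product of these approximants to obtain the desired sequence $(v_n)$.

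The first ingredient is the standard fact that any unitary homotopic to the identity in a unital $\C$-algebra is a finite product of exponentials of self-adjoint elements: if $t\mapsto u(t)$ is a norm-continuous path from $1_{\mathcal{M}(A)}$ to $u$, choose a partition $0=t_0<\dots<t_k=1$ fine enough that each unitary $u(t_{i-1})^{*}u(t_i)$ has spectrum in a proper arc of the unit circle; the principal branch of the logarithm then produces self-adjoint elements $h_i\in\mathcal{M}(A)$ with $u(t_{i-1})^{*}u(t_i)=\exp(ih_i)$, and telescoping gives $u=\exp(ih_1)\cdots\exp(ih_k)$.

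Since $A$ is $\sigma$-unital, I would fix a countable approximate unit $(e_n)\subset A$ and set $h_{j,n}:=e_n h_j e_n$, which is self-adjoint and lies in $A$; clearly $\|h_{j,n}\|\leq\|h_j\|$ and, for any $a\in A$, $h_{j,n}a\to h_j a$ and $a h_{j,n}\to a h_j$ in norm, so $h_{j,n}\to h_j$ in the strict topology on $\mathcal{M}(A)$. From the integral identity
\begin{equation*}
\exp(ih_{j,n})-\exp(ih_j)=i\int_0^1\exp(ith_{j,n})(h_{j,n}-h_j)\exp(i(1-t)h_j)\,dt
\end{equation*}
and the compactness of the range of the continuous map $[0,1]\to A$, $t\mapsto\exp(i(1-t)h_j)a$, a routine dominated-convergence argument yields $(\exp(ih_{j,n})-\exp(ih_j))a\to 0$; the right-multiplication estimate is analogous. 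Hence $\exp(ih_{j,n})\to\exp(ih_j)$ strictly for each $j$.

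Setting $v_n:=\exp(ih_{1,n})\cdots\exp(ih_{k,n})$, each factor is unitary and differs from $1$ by an element of $A$, so $v_n\in\mathcal{U}(1+A)$. Joint strict continuity of multiplication on norm-bounded subsets of $\mathcal{M}(A)$ then gives $v_n\to u$ strictly, and for any $a\in A$ the decomposition
\begin{equation*}
v_n a v_n^{*}-u a u^{*}=(v_n-u)a v_n^{*}+u a(v_n^{*}-u^{*})
\end{equation*}
reduces the convergence $\mathrm{Ad}(v_n)(a)\to\mathrm{Ad}(u)(a)$ to the strict convergence $v_n\to u$ applied to $a$ and $a^{*}$. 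I expect the main technical point to be the strict convergence $\exp(ih_{j,n})\to\exp(ih_j)$, which is really the observation that the continuous functional calculus is strictly continuous on norm-bounded self-adjoint subsets of $\mathcal{M}(A)$; the remaining steps are routine reductions.
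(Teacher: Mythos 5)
Your proof is correct, but it takes a genuinely different route from the paper's. The paper does not prove the lemma directly at all: it observes that the statement is a special case of \cite[Lemma 4.3]{GS22}, a result formulated in the language of equivariant Kasparov theory, applied with $D=0$, the trivial group action, and $A$ in place of the algebra called $B$ there. Your argument instead gives a self-contained, elementary proof: write $u$ as a finite product $\exp(ih_1)\cdots\exp(ih_k)$ of exponentials of self-adjoint multipliers (via a fine subdivision of the homotopy and the principal logarithm), compress each $h_j$ by a sequential approximate unit to get $h_{j,n}=e_n h_j e_n\in A$ with $h_{j,n}\to h_j$ strictly, use the integral formula
\begin{equation*}
\exp(ih_{j,n})-\exp(ih_j)=i\int_0^1\exp(ith_{j,n})(h_{j,n}-h_j)\exp(i(1-t)h_j)\,dt
\end{equation*}
together with a uniform norm bound and dominated convergence to upgrade strict convergence of the exponents to strict convergence of the exponentials, and finally set $v_n=\exp(ih_{1,n})\cdots\exp(ih_{k,n})\in\mathcal U(1+A)$, whence $v_n\to u$ strictly and hence $\mathrm{Ad}(v_n)\to\mathrm{Ad}(u)$ point-norm. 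All steps check out: the $h_{j,n}$ are self-adjoint in $A$ so each $\exp(ih_{j,n})\in\mathcal U(1+A)$, the integral identity follows by differentiating $F(t)=\exp(ith_{j,n})\exp(i(1-t)h_j)$, and the final decomposition $v_nav_n^*-uau^*=(v_n-u)av_n^*+ua(v_n^*-u^*)$ correctly reduces to strict convergence. The trade-off is the usual one: your route is longer but elementary and avoids importing the equivariant stable uniqueness machinery, whereas the paper's citation is a one-liner at the cost of invoking a much more general theorem.
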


\begin{proof}
	This statement is a consequence of a special case of \cite[Lemma 4.3]{GS22}, applied to $D = 0$, the trivial action in place of $\beta$, and for the \C-algebra $A$ in place of $B$.
\end{proof}

We now proceed with the key technical lemma of this note, which is a non-unital and stably finite version of the reduction argument for homotopic maps in \cite[Lemma 5.10]{Sza21} (this argument originates in \cite{Phillips1997}).
We will use the following standard notation:
given a $^*$-homomorphism $\Phi: A \to C([0,1],B)$ and $t \in [0,1]$, we write $\Phi_t(a):= \Phi(a)(t)$.
The basic idea of the proof will be to use the maps $\varphi_{\W}$ and $\varphi_{\Zo}$ (see Theorem \ref{thm:useful.maps}) to move a given unitary equivalence of $\Phi_s \otimes \id_{\W}$ and $\Phi_t \otimes \id_{\W}$ from $B \otimes \W$ to $B \otimes \Zo$ combined with a handy identification of $\Zo$ with $M_{2n+1}(\Zo)$.

\begin{lemma}\label{keylemma}
	Let $A$ and $B$ be separable $\C$-algebras.
Let $\Phi: A \to C([0,1], B)$ be a $^*$-homomorphism.
Suppose that 
	\begin{align}
		\Phi_s \otimes \id_{\W} \approx_{\mathrm{u}} \Phi_t \otimes \id_{\W} \quad \text{for all} \ s,t \in [0,1]. \notag
	\end{align}
	Then  
	\begin{align} \label{eq:key-lemma-conclusion}
		\Phi_0 \otimes \id_{\Zo} \approx_{\mathrm{u}} \Phi_1 \otimes \id_{\Zo}.
	\end{align}
\end{lemma}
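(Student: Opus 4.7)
The plan is to combine three ingredients: (i) the hypothesis transported from $B\otimes\W$ to $B\otimes\Zo$ via the maps $\varphi_{\W}$ and $\varphi_{\Zo}$ of Theorem~\ref{thm:useful.maps}; (ii) Lemma~\ref{lem:GL}, which exchanges the composite $(\varphi_{\Zo}\varphi_{\W})\otimes 1_{M_2}$ for $\id_{\Zo}\oplus\sigma$; and (iii) the identification $\Zo\cong M_{2n+1}(\Zo)$ provided by Lemma~\ref{lem:Gamma}, which presents $\Phi_s\otimes\id_\Zo$ as a diagonal matrix.

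First I set up the \emph{pair trick}. Given a unitary $u\in\mathcal{U}(1+B\otimes\W)$ implementing an approximate equivalence $\Phi_s\otimes\id_\W\approx_{\mathrm{u}}\Phi_t\otimes\id_\W$, applying the unital extension of $\id_B\otimes\varphi_{\Zo}$ and pre-composing by $\id_A\otimes\varphi_{\W}$ yields a unitary in $\mathcal{U}(1+B\otimes\Zo)$ witnessing $\Phi_s\otimes(\varphi_{\Zo}\varphi_{\W})\approx_{\mathrm{u}}\Phi_t\otimes(\varphi_{\Zo}\varphi_{\W})$ as maps $A\otimes\Zo\to B\otimes\Zo$. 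Tensoring Lemma~\ref{lem:GL} with $\Phi_s$ and combining with the above yields
\[
	\Phi_s\otimes(\id_\Zo\oplus\sigma)\;\approx_{\mathrm{u}}\;\Phi_t\otimes(\id_\Zo\oplus\sigma) \qquad \text{as maps }A\otimes\Zo\to M_2(B\otimes\Zo)
\]
for all $s,t\in[0,1]$, implemented by unitaries in $\mathcal{U}(1+M_2(B\otimes\Zo))$. Block-embedding such an implementing unitary at any pair of neighbouring positions of a larger matrix algebra (with the identity on the remaining slots) shows that a neighbouring $(\id_\Zo,\sigma)$-pair of diagonal slots sharing parameter $\Phi_s$ can be switched to parameter $\Phi_t$ by an inner conjugation in $\mathcal{U}(1+M_{2n+1}(B\otimes\Zo))$.

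By Lemma~\ref{lem:Gamma}, a $K$-positive isomorphism $\Theta\colon\Zo\to M_{2n+1}(\Zo)$ is approximately unitarily equivalent to $\Gamma_n$, so it suffices to prove $\Phi_0\otimes\Gamma_n\approx_{\mathrm{u}}\Phi_1\otimes\Gamma_n$ as maps $A\otimes\Zo\to M_{2n+1}(B\otimes\Zo)$ for sufficiently large $n$. The map $\Phi_s\otimes\Gamma_n$ is the diagonal with $\Phi_s\otimes\id_\Zo$ at the $n+1$ odd positions and $\Phi_s\otimes\sigma$ at the $n$ even positions. Fix a finite set $F\subset A\otimes\Zo$ and $\varepsilon>0$, and choose $n$ so large that $\|\Phi_{i/n}(a)-\Phi_{(i-1)/n}(a)\|<\varepsilon/n$ for each $a\in F$ and $i=1,\ldots,n$, which is possible by the continuity of $\Phi$. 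I transform $\Phi_0\otimes\Gamma_n$ into $\Phi_1\otimes\Gamma_n$ through $n$ iterative stages. At stage $i$, I first apply the pair trick at the pair $(2i,2i+1)$ to convert its parameter from $\Phi_0$ to $\Phi_{i/n}$, and then conjugate by the permutation unitary swapping position $1$ with position $2i+1$ (both are $\id_\Zo$-slots). This permutation lies in $U(2n+1)\subset\mathcal{U}(\mathcal{M}(M_{2n+1}(B\otimes\Zo)))$, is homotopic to the identity by connectedness of $U(2n+1)$, and hence by Lemma~\ref{lem:Ad(u)} its conjugation action is approximable by that of a unitary in $\mathcal{U}(1+M_{2n+1}(B\otimes\Zo))$. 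An easy induction on $i$ shows that after stage $n$, position $1$ carries $\Phi_1\otimes\id_\Zo$, while for each $i=1,\ldots,n$ the pair $(2i,2i+1)$ carries $(\Phi_{i/n}\otimes\sigma,\Phi_{(i-1)/n}\otimes\id_\Zo)$. A final round of $n$ pair tricks, each preceded by a continuity approximation of size $\varepsilon/n$ that unifies the corresponding pair's parameter, brings every pair to $(\Phi_1\otimes\sigma,\Phi_1\otimes\id_\Zo)$ and hence the whole matrix to $\Phi_1\otimes\Gamma_n$, with total error $\lesssim\varepsilon$ on $F$.

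The main obstacle is the unmatched $\id_\Zo$-slot in $\Gamma_n$ (there are $n+1$ $\id_\Zo$-slots but only $n$ $\sigma$-slots), which prevents a simple parallel application of pair tricks across the diagonal. This is overcome by using permutation swaps to transport the unmatched slot through the matrix together with the continuity of $\Phi$ to absorb the resulting small discrepancies, while Lemma~\ref{lem:Ad(u)} ensures that the permutation-induced conjugations can be realised by unitaries in $\mathcal{U}(1+M_{2n+1}(B\otimes\Zo))$ rather than merely in the multiplier algebra.
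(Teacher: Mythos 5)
Your proof uses the same cornerstone ingredients as the paper's --- the $\W$--$\Zo$ interplay through $\varphi_\W$, $\varphi_\Zo$ and Lemma~\ref{lem:GL}, the reduction to $\Gamma_n$ via Lemma~\ref{lem:Gamma}, and a Phillips-style ``zipper'' over the $2n+1$ diagonal blocks --- but it executes the zipper in a genuinely different way. The paper does it in one shot: it assembles two block-diagonal unitaries $V$ and $U$ acting on the $\Upsilon'$- and $\Upsilon$-groupings of $\Gamma_n$ respectively (lifting the implementing unitaries $v_j, u_j$ from $B\otimes\W$ to $B\otimes M_2(\Zo)$ via $\theta'$, $\theta$), uses a \emph{single} continuity estimate to shift the $\sigma$-slots between these two groupings, and concludes that $UV$ approximately conjugates $\Phi_0\otimes\Gamma_n$ to $\Phi_1\otimes\Gamma_n$; the only appeal to Lemma~\ref{lem:Ad(u)} is for the fixed $2\times 2$ flip relating $\Upsilon$ and $\Upsilon'$. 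Your version is iterative: $n$ stages, each a pair trick at $(2i,2i+1)$ followed by a permutation swap $1\leftrightarrow 2i+1$, followed by a final round of $n$ pair tricks. This requires $n$ separate invocations of Lemma~\ref{lem:Ad(u)} (valid, since each transposition is homotopic to $1$ in the connected group $U(2n+1) \subset \mathcal{U}(\mathcal{M}(M_{2n+1}(B\otimes\Zo)))$) and more bookkeeping across the stages, but the move-by-move mechanics are arguably more transparent. Two small slips to fix: (i) continuity of $\Phi$ does \emph{not} let you choose $n$ so that $\|\Phi_{i/n}(a)-\Phi_{(i-1)/n}(a)\|<\varepsilon/n$ --- uniform continuity only gives a bound of the form $<\varepsilon$ for all $i$ once $n$ is large, and the $\varepsilon/n$ rate fails for, say, a H\"older-$\tfrac12$ path --- but the weaker bound suffices since in the final round the continuity errors live on disjoint diagonal blocks, so their operator-norm contribution is a maximum, not a sum; (ii) the pairs $(2i,2i+1)$ carry the $\Upsilon'=\sigma\oplus\id_\Zo$ ordering, not $\Upsilon=\id_\Zo\oplus\sigma$, so your pair trick should be stated for $\Upsilon'$ (or deduced from the $\Upsilon$ version via the flip and Lemma~\ref{lem:Ad(u)}, as the paper does).
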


\begin{proof}
Let $\mathfrak{F} \subset A \otimes \Zo$ be a finite set and $\varepsilon >0$. 
We want to find a unitary in $B\otimes\Zo$ that $(\mathfrak{F},\varepsilon)$-approximately conjugates the first map in \eqref{eq:key-lemma-conclusion} onto the second.
Without loss of generality, we may assume that there exist finite sets of contractions $\mathfrak{F}_A\subset A$ and $\mathfrak{F}_{\Zo}\subset\Zo$ such that
\[
\mathfrak{F} = \left\{ a\otimes z \mid a\in\mathfrak{F}_A,\ z \in \mathfrak{F} \right\}.
\]
By uniform continuity, there exists $n \in \mathbb{N}$ such that if $s,t\in [0,1]$ are arbitrary parameters with $|s-t| \leq \frac{1}{n}$ then 
\begin{align}\label{kl:eq1}
	\Phi_s(a) & \approx\bsub{\varepsilon/9} \Phi_t(a), \quad a \in \mathfrak{F}_A.
\end{align}
By assumption, one has for each $j \in \{1, \ldots, n\}$ a unitary $v_j \in \mathcal{U}(1+B\otimes \W)$ such that
\begin{align}
	v_j \left(\Phi_0 (a) \otimes \varphi_\W(z) \right) v_j^* & \approx\bsub{\varepsilon/9} \Phi_{\frac{j}{n}} (a)\otimes \varphi_\W (z) \notag
\end{align}
for all $a \in \mathfrak{F}_A$ and $z \in \mathfrak{F}_{\Zo}$.
Similarly, there are unitaries $u_j \in \mathcal{U}(1+B\otimes \W)$ for $j \in \{0, 1, \ldots, n-1\}$ such that
\begin{align}\label{kl:eq3}
	u_j \left(\Phi_{\frac{j}{n}}(a) \otimes \varphi_\W(z) \right)u_j^* & \approx\bsub{\varepsilon/9}  \Phi_1(a) \otimes \varphi_\W (z), 
\end{align}
for all $a \in \mathfrak{F}_A$ and $z \in \mathfrak{F}_{\Zo}$.

We will use the maps $\varphi_{\W}: \Zo \to \W$, $\varphi_{\Zo}: \W \to \Zo$, $\sigma: \Zo \to \Zo$ and $\Omega, \Upsilon: \Zo \to M_2(\Zo)$ introduced in Section \ref{section:WandZ0}.
By Lemma \ref{lem:GL}, the maps $\Omega$ and $\Upsilon$ are approximately unitarily equivalent.
Hence, there is a unitary $\mathtt{u} \in \mathcal{U}(1+M_2(\Zo))$ such that
\begin{align}\label{kl:eq4}
	\mathtt{u} \Omega(z) \mathtt{u}^* & \approx\bsub{\varepsilon/9}  \Upsilon(z), \qquad z \in \mathfrak{F}_{\Zo}.
\end{align}
Consider the unitary
\begin{align}
	\begin{pmatrix}
		0 & 1_{\mathcal{M}(\Zo)} \\
		1_{\mathcal{M}(\Zo)} & 0
	\end{pmatrix} \in M_2(\mathcal{M}(\Zo)), \notag
\end{align}
which is homotopic to $1_{M_2(\mathcal{M}(\Zo))}$. 
We set $\Upsilon': \Zo \to M_2(\Zo)$ by
\begin{align}\label{kl:eq15}
	\Upsilon'(z) := \mathrm{Ad}
		\begin{pmatrix}
		0 & 1_{\mathcal{M}(\Zo)} \\
		1_{\mathcal{M}(\Zo)} & 0
		\end{pmatrix} 
		 \circ \Upsilon (z) = \begin{pmatrix}
		\sigma(a) & 0 \\
		0 & a 
	\end{pmatrix}.
\end{align}
It follows from Lemma \ref{lem:Ad(u)} 
that
$\Upsilon \approx_{\mathrm{u}} \Upsilon'$.
The approximate unitary equivalence between $\Omega$ and $\Upsilon$ entails
$\Omega \approx_{\mathrm{u}} \Upsilon'$.
Then, there exists a unitary $\mathtt{v} \in \mathcal{U}(1 + M_2(\Zo))$ such that
\begin{align}\label{kl:eq5}
	\mathtt{v} \Omega(z) \mathtt{v}^* \approx\bsub{\varepsilon/9} \Upsilon'(z).
\end{align}
We define $\theta, \theta': B \otimes \W \to B \otimes M_2(\Zo)$ by
\begin{align}
	\theta := \id_B \otimes (\mathrm{Ad}(\mathtt{u})\circ (\varphi_{\Zo} \otimes 1_{M_2})) \notag
\end{align}
and
\begin{align} 
	\theta' := \id_B \otimes (\mathrm{Ad}(\mathtt{v})\circ (\varphi_{\Zo} \otimes 1_{M_2})). \notag
\end{align}
In particular, these yield
\begin{align}\label{kl:eq16}
	\theta(b \otimes \varphi_\W(z)) = b \otimes \mathtt{u} \Omega(z)\mathtt{u}^*  \overset{\eqref{kl:eq4}}{\approx}_{\makebox[0pt]{\ \footnotesize$\varepsilon/9$}} \hspace{3mm} b\otimes \Upsilon(z) 
\end{align}
and
\begin{align}
		\theta'(b \otimes \varphi_\W (z))  = b \otimes \mathtt{v} \Omega(z) \mathtt{v}^*  \overset{\eqref{kl:eq5}}{\approx}_{\makebox[0pt]{\ \footnotesize$\varepsilon/9$}} \hspace{3mm} b \otimes \Upsilon'(z) \notag
\end{align}
for $z \in \mathfrak{F}_{\Zo}$ and any contraction $b \in B$.
Therefore
\begin{align}\label{kl:eq17}
	\theta(u_j) \left( \Phi_{\frac{j}{n}}(a) \otimes \Upsilon(z) \right) \theta(u_j)^* &
	\overset{\eqref{kl:eq16}}{\approx}_{\makebox[0pt]{\ \footnotesize$\varepsilon/9$}} \hspace{3mm} \theta(u_j) \theta \left( \Phi_{\frac{j}{n}}(a) \otimes \varphi_\W (z) \right) \theta(u_j)^* \notag \\
	& \overset{\phantom{\eqref{kl:eq3}}}{=} \hspace{3mm} \theta \left( u_j \biggl( \Phi_{\frac{j}{n}}(a) \otimes \varphi_\W (z)\biggr)u_j^* \right) \notag \\
	& \overset{\eqref{kl:eq3}}{\approx}_{\makebox[0pt]{\ \footnotesize$\varepsilon/9$}} \hspace{3mm} \theta(\Phi_1(a) \otimes \varphi_\W(z)) \notag \\
	& \overset{\eqref{kl:eq16}}{\approx}_{\makebox[0pt]{\ \footnotesize$\varepsilon/9$}} \hspace{3mm} \Phi_1(a) \otimes \Upsilon(z),
\end{align}
for $j \in \{0, \ldots, n-1 \}$.
A similar calculations shows
\begin{align}\label{kl:eq18}
	\theta'(v_j) \left( \Phi_{0} (a) \otimes \Upsilon'(z) \right) \theta'(v_j)^* \approx\bsub{\varepsilon/3} \Phi_{\frac{j}{n}}(a) \otimes \Upsilon'(z) 
\end{align}
for $j \in \{1, \ldots, n \}$.

With the notation introduced in \eqref{ji} and \eqref{ki}, we define unitaries in $M_{2n+1}(B\otimes \Zo)$ in the following way:
\begin{align}
	V &:= e_{11}\otimes 1_{\mathcal{M}(B \otimes \Zo)} + \sum_{j=1}^n \kappa_{2j}(\theta'(v_j)), \notag \\
	U &:= \sum_{j=1}^{n} \kappa_{2j-1}(\theta(u_{j-1})) + e_{2n+1,2n+1}\otimes 1_{\mathcal{M}(B\otimes \Zo)}. \notag
\end{align}
Schematically, these unitaries correspond to block-diagonal matrices of the form
\begin{align}
	V = \begin{pmatrix}
		1_{\mathcal{M}(B \otimes \Zo)}  \\
		& \theta'(v_1) \\ 
		& & \theta'(v_2) \\
		& & & \ddots \\
		& & & & \theta'(v_n)
	\end{pmatrix} \notag
\end{align}
and
\begin{align}
	U = \begin{pmatrix}
		\theta(u_0) \\ 
	 	& \theta(u_1) \\
		& & \ddots \\
		& & & \theta(u_{n-1}) \\	
		& & & & 1_{\mathcal{M}(B \otimes \Zo)} 	
	\end{pmatrix}. \notag
\end{align}
If $b \in B \otimes \Zo$ and $x_0, x_1, \ldots, x_n \in M_2(B\otimes \Zo)$, then
\begin{align}\label{kl:eq12}
V\left(\iota_1(b) + \sum_{j=1}^n \kappa_{2j} (x_j)\right) V^*  = \iota_1 (b) + \sum_{j=1}^n \kappa_{2j}(\theta'(v_j) x_j \theta'(v_j)^* ) 
\end{align}
and
\begin{align}\label{kl:eq13}
&U \left( \sum_{j=1}^{n} \kappa_{2j-1}(x_{j-1})  + \iota_{2n+1}(b)   \right)	U^* \notag \\  & \hspace{2cm} = \sum_{j=1}^{n} \kappa_{2j-1}(\theta(u_{j-1})x_{j-1} \theta(u_{j-1})^*) + \iota_{2n+1}(b).
\end{align}
We will now employ the $^*$-homomorphism $\Gamma_n:  \Zo \to M_{2n+1}(\Zo)$ given in Lemma \ref{lem:Gamma} as
\begin{align}
	\Gamma_n(a) := 
	\sum\limits_{j=0}^n \iota_{2j+1}(a)+ \sum\limits_{j=1}^n \iota_{2j}(\sigma (a)). \notag
\end{align}
Observe that we can write $\Gamma_n$ using either $\Upsilon$ or $\Upsilon'$  in the following way
\begin{align}\label{kl:eq21}
	\Gamma_n(a) = 
\begin{pmatrix}
	\Upsilon(a)  \\
	& \ddots \\
	& & \Upsilon(a) \\
	& & & a
\end{pmatrix} =
\begin{pmatrix}
	a \\
	& \Upsilon'(a) \\
	& & \ddots \\
	& & & \Upsilon'(a)
\end{pmatrix}. 
\end{align}
Having this observation mind, we then obtain for $a \in \mathfrak{F}_A$ and $z \in \mathfrak{F}_{\Zo}$ that
\begin{align}\label{kl:eq11}
	V & ( \Phi_0(a)  \otimes \Gamma_n(z) )V^* \notag \\ 
	& \overset{\phantom{\eqref{kl:eq15}}}{=} \hspace{3mm} V \left(	\iota_1(\Phi_0(a) \otimes z ) + \sum_{j=1}^n \kappa_{2j} \left(\Phi_0(a) \otimes \Upsilon'(z) \right) \right)V^* \notag 
	\\
	& \overset{\eqref{kl:eq12}}{=}  \hspace{3mm} \iota_1 (\Phi_0(a)\otimes z ) + \sum_{j=1}^n \kappa_{2j}\left(\theta'(v_j) \left( \Phi_0(a) \otimes 
	\Upsilon' (z) \right) \theta'(v_j^* ) \right) \notag  
	\\
	& \overset{\eqref{kl:eq18}}{\approx}_{\makebox[0pt]{\footnotesize$\varepsilon/3$}} \hspace{3mm} \iota_1 (\Phi_0(a)\otimes z) + \sum_{j=1}^n \kappa_{2j}\left(\Phi_{\frac{j}{n}}(a) \otimes \Upsilon'(z) \right) \notag 
	\\
	& \overset{\eqref{kl:eq15}}{=} \hspace{3mm} \iota_1(\Phi_0(a) \otimes z) + \sum_{j=1}^{n} \left(  \iota_{2j}\left(\Phi_{\frac{j}{n}}(a) \otimes \sigma(z)\right) + \iota_{2j+1}\left(\Phi_{\frac{j}{n}} (a) \otimes z\right) \right) \notag 
	\\
	& \overset{\eqref{kl:eq1}}{\approx}_{\makebox[0pt]{\footnotesize \hspace{1mm}$\varepsilon/9$}} \hspace{3mm}  \iota_1(\Phi_0(a) \otimes z) + \sum_{j=1}^n \left( \iota_{2j} \left(\Phi_{\frac{j-1}{n}}(a) \otimes \sigma(z) \right) + \iota_{2j+1} \left(\Phi_{\frac{j}{n}} (a) \otimes z \right) \right) \notag
	\\
	& \overset{\phantom{\eqref{kl:eq15}}}{=} \hspace{3mm}  \sum_{j=1}^n \left( \iota_{2j-1} \left(\Phi_{\frac{j-1}{n}}(a) \otimes z \right) + \iota_{2j} \left(\Phi_{\frac{j-1}{n}}(a) \otimes \sigma(z) \right) \right) + \iota_{2n+1}(\Phi_1(a) \otimes z)  \notag 
	\\ 
	& \overset{\eqref{lem:GL.eq1}}{=} \hspace{3mm} \sum_{j=1}^{n} \kappa_{2j-1} \left(\Phi_{\frac{j-1}{n}} (a) \otimes \Upsilon (z) \right) + \iota_{2n+1} ( \Phi_1(a) \otimes z). 
\end{align}
Hence
\begin{align}\label{kl:eq20}
	& U  V (\Phi_0(a) \otimes  \Gamma_n(z))V^*U^* \notag \\
	& \overset{\eqref{kl:eq11}}{\approx}_{\phantom{.}\makebox[0pt]{\footnotesize$4\varepsilon/9$}} \hspace{4mm} U\left(\sum_{j=1}^{n} \kappa_{2j-1} \left(\Phi_{\frac{j-1}{n}} (a) \otimes \Upsilon (z) \right) + \iota_{2n+1} ( \Phi_1(a) \otimes z)\right)U^* \notag \\
	& \overset{\eqref{kl:eq13}}{=} \hspace{3mm} \sum_{j=1}^{n} \kappa_{2j-1} \left( \theta(u_{j-1}) \left( \Phi_{\frac{j-1}{n}}(a) \otimes \varphi_\W (z)\right) \theta(u_{j-1})^*  \right) + \iota_{2n+1} (\Phi_1(a) \otimes z)  \notag \\
	& \overset{\eqref{kl:eq17}}{\approx}_{\makebox[0pt]{\footnotesize$\varepsilon/3$}} \hspace{3mm} \sum_{j=1}^n \kappa_{2j-1}(\Phi_1(a) \otimes \Upsilon(z)) + \iota_{2n+1}(\Phi_1(a) \otimes z)  \notag \\
	& \overset{\eqref{kl:eq21}}{=} \hspace{3mm}\Phi_1(a) \otimes \Gamma_n(z). 
\end{align}
Let $\gamma: \Zo \to M_{2n+1}(\Zo)$ be any $K$-positive isomorphism.
By Lemma \ref{lem:Gamma}, $\gamma$ and $\Gamma_n$ are approximately unitarily equivalent.
So there is a unitary $W \in \mathcal{U}(1+M_{2n+1}(\Zo))$ such that
\begin{align}\label{kl:eq8}
	W \gamma(z) W^* \approx\bsub{\varepsilon/9} \Gamma_n(z),  \qquad z \in \mathfrak{F}_{\Zo}.
\end{align}
Let us consider the isomorphism $\eta: B \otimes \Zo \to B \otimes M_{2n+1}(\Zo)$ given by $\eta := \id_B \otimes \left(\mathrm{Ad}(W) \circ \gamma \right)$, and set
\begin{align}\label{kl:eq9}
	w := \eta^{-1} (UV) \in \mathcal{U}(1+B\otimes \Zo). 
\end{align}
Finally, we observe that 
\begin{align}
	 w(\Phi_0(a) \otimes z) w^* & \overset{\eqref{kl:eq9}}{=} \hspace{5mm} \eta^{-1} \left(UV  \eta\left( \Phi_0(a)\otimes z \right)  V^*U^* \right) \notag \\
	 & \overset{\phantom{\eqref{kl:eq11}}}{=} \hspace{5mm} \eta^{-1} \left( UV \left(\Phi_0(a)\otimes W \gamma(z) W^* \right) V^*U^* \right) \notag \\
	 & \overset{\eqref{kl:eq8}}{\approx}_{\makebox[0pt]{\footnotesize$\varepsilon/9$}} \hspace{5mm} \eta^{-1}\left(UV \left( \Phi_0(a) \otimes \Gamma_n(z) \right) V^*U^* \right) \notag \\
	 & \overset{\eqref{kl:eq20}}{\approx}_{\makebox[0pt]{\phantom{1}\footnotesize$7\varepsilon/9$}} \hspace{5mm} \eta^{-1} \left( \Phi_1(a) \otimes \Gamma_n(z) \right) \notag \\
	 & \overset{\eqref{kl:eq8}}{\approx}_{\makebox[0pt]{\footnotesize$\varepsilon/9$}} \hspace{5mm} \eta^{-1} \left(\Phi_1(a) \otimes W \gamma(z) W^* \right)\notag \\
	 & \overset{\phantom{\eqref{kl:eq11}}}{=} \hspace{5mm} \eta^{-1} \left(\eta (\Phi_1(a) \otimes z) \right) \notag  \\
	 & \overset{\phantom{\eqref{kl:eq11}}}{=} \hspace{5mm} \Phi_1(a) \otimes z. \notag
\end{align}
This shows that $\Phi_1(a) \otimes z \approx\bsub{\varepsilon} w (\Phi_0(a)\otimes z)w^*$ for all $a \in \mathfrak{F}_A$ and $\mathfrak{F}_{\Zo}$.
This verifies $\Phi_0 \otimes \id_{\Zo} \approx_{\mathrm{u}} \Phi_1 \otimes \id_{\Zo}$.
\end{proof}

We now present the notion of trace-preserving homotopy for $^*$-homomorphisms between certain $\C$-algebras.
It will be a key ingredient in the main result of this paper.

\begin{definition}\label{def:tracial.homotopy}
	Let $\varphi, \psi: A \to B$ be $^*$-homomorphisms between $\C$-algebras with $T^+(B)\neq\emptyset$.
	We say that $\varphi$ and $\psi$ are \emph{trace-preservingly homotopic} if there is a $^*$-homomorphism $\Phi: A \to C([0,1], B)$ with $\Phi_0 = \varphi$, $\Phi_1 = \psi$ such that 
	$$\tau(\Phi_t(a)) = \tau(\Phi_s(a))$$ 
	for all $a \in A$, $s,t \in [0,1]$ and $\tau \in T^+(B)$. 
\end{definition}

We now state a homotopy rigidity result for $^*$-homomorphisms, which can be viewed as the main result of this paper.
The key feature of it, as pointed out in the introduction, is that it does not assume that any of the underlying \C-algebras has to satisfy the UCT.

\begin{theorem}\label{maintheorem}
	Let $A$ and $B$ be separable, simple and nuclear $\C$-algebras with $T^+(B)\neq\emptyset$.
	Suppose $\varphi, \psi: A \to B$ are trace-preservingly homotopic $^*$-homomorphisms.
	Then
	\begin{equation}
		\varphi \otimes \id_{\Zo} \approx_{\mathrm{u}} \psi \otimes \id_{\Zo}. \notag 
	\end{equation} 
\end{theorem}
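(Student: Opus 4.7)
The plan is to reduce the statement to two results already at our disposal: Theorem~\ref{thm:class.maps.KK0}, which classifies $^*$-homomorphisms into simple, separable, nuclear, $\Z$-stable and $KK$-contractible targets by their behaviour on traces, and Lemma~\ref{keylemma}, which converts a pointwise $\W$-stabilised unitary equivalence along a homotopy into a $\Zo$-stabilised unitary equivalence of the endpoints. We then feed the trace-preserving homotopy through these two inputs in succession.

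More concretely, let $\Phi\colon A\to C([0,1],B)$ be a trace-preserving homotopy with $\Phi_0=\varphi$ and $\Phi_1=\psi$ as in Definition~\ref{def:tracial.homotopy}. The first step is to show that $\Phi_s\otimes\id_{\W}\approx_{\mathrm{u}}\Phi_t\otimes\id_{\W}$ for all $s,t\in[0,1]$. To this end, both $A\otimes\W$ and $B\otimes\W$ are simple, separable, nuclear, stably projectionless and $\Z$-stable, since $\W$ is $\Z$-absorbing and already has all of these properties. Moreover, both are $KK$-contractible: $\W\simeq_{KK}0$ because $KK(\W,\W)=0$, and tensoring with a nuclear $KK$-contractible algebra preserves this, so $A\otimes\W\simeq_{KK}A\otimes 0=0$, and likewise for $B\otimes\W$. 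Finally, since $\W$ has a unique (up to scalar) lower semicontinuous densely defined trace $\tau_\W$, every $\tau\in T^+(B\otimes\W)$ is of the form $\tau_B\otimes\tau_\W$ for some $\tau_B\in T^+(B)$; on elementary tensors $a\otimes z\in A\otimes\W$,
\[
\tau\bigl((\Phi_s\otimes\id_\W)(a\otimes z)\bigr)=\tau_B(\Phi_s(a))\,\tau_\W(z)=\tau_B(\Phi_t(a))\,\tau_\W(z)=\tau\bigl((\Phi_t\otimes\id_\W)(a\otimes z)\bigr),
\]
with the middle equality coming from the trace-preservation of $\Phi$. All hypotheses of Theorem~\ref{thm:class.maps.KK0} are thus met, delivering the desired pointwise $\W$-stabilised approximate unitary equivalence.

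The second step is a direct application of Lemma~\ref{keylemma} to $\Phi$: it yields $\varphi\otimes\id_{\Zo}=\Phi_0\otimes\id_{\Zo}\approx_{\mathrm{u}}\Phi_1\otimes\id_{\Zo}=\psi\otimes\id_{\Zo}$, which is exactly the conclusion. The genuine effort of the paper is concentrated inside Lemma~\ref{keylemma}, with its delicate patching via $\varphi_\W$, $\varphi_{\Zo}$ and the $M_{2n+1}$-block trick of Lemma~\ref{lem:Gamma}; the deduction of Theorem~\ref{maintheorem} itself is merely the routine verification above that the $\W$-stabilised maps satisfy the tracial classification hypotheses, with the trace comparison coming for free from Definition~\ref{def:tracial.homotopy}. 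Accordingly, I do not expect any substantive obstacle at this stage of the argument beyond correctly bookkeeping the identification of $T^+(B\otimes\W)$ with $T^+(B)$ and the $KK$-contractibility of the tensor products.
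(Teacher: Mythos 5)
Your proposal is correct and follows essentially the same route as the paper's proof: pass to $\W$-stabilisations, verify the hypotheses of Theorem~\ref{thm:class.maps.KK0} (in particular $KK$-contractibility of $A\otimes\W$, $B\otimes\W$ and the tracial compatibility) to obtain $\Phi_s\otimes\id_\W\approx_{\mathrm{u}}\Phi_t\otimes\id_\W$ for all $s,t$, and then invoke Lemma~\ref{keylemma}. You merely spell out a few routine verifications that the paper leaves implicit; there is no substantive difference.
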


\begin{proof}
	By hypothesis there exists a $^*$-homomorphism $\Phi: A \to C([0,1], B)$ such that $\Phi_0 = \varphi, \Phi_1 = \psi$ and 
	\begin{equation}
		\theta\circ\Phi_t = \theta\circ\Phi_s, \quad \theta \in T^+(B), \ s,t \in [0,1]. \notag
	\end{equation} 
	This implies
	\begin{equation}
		\tau \circ (\Phi_t \otimes \id_{\W}) = \tau \circ (\Phi_s \otimes \id_{\W}), \quad \tau \in T^+(B \otimes \W),\ s,t \in [0,1]. \notag
	\end{equation}
	On the other hand, it follows that $A \otimes \W$ and $B \otimes \W$ are stably projectionless and $\Z$-stable (\cite[Corollary 6.7]{Elliott2020}).
	Then, by Theorem \ref{thm:class.maps.KK0}, we obtain that $\Phi_t \otimes \id_{\W} \approx_{\mathrm{u}} \Phi_s \otimes \id_{\W}$ for all $s,t \in [0,1]$.
	Finally, by Lemma \ref{keylemma}, the $^*$-homomorphisms $\varphi \otimes \id_{\Zo}$ and $\psi \otimes \id_{\Zo}$ are approximately unitarily equivalent.
\end{proof}

As a byproduct of Theorem \ref{maintheorem}, we obtain a precursor $\Zo$-stable classification theorem of sorts for separable, simple, nuclear $\C$-algebras without requiring the UCT.
For this result, we will need to work with the notion of trace-preserving homotopy equivalence for $\C$-algebras induced from the notion above.

\begin{definition}
	Let $A$ and $B$ be $\C$-algebras with $T^+(A)\neq\emptyset\neq T^+(B)$.
	We say that $A$ and $B$ are \emph{trace-preservingly homotopy equivalent} if there exist $^*$-homomorphisms $\varphi: A \to B$ and $\psi: B \to A$ such that $\psi \circ \varphi$ and $\varphi \circ \psi$ are trace-preservingly homotopic to $\id_A$ and $\id_B$, respectively. 
\end{definition}

\begin{theorem}\label{theo:class}
	Let $A$ and $B$ be separable, simple, nuclear $\C$-algebras.
	If $A$ and $B$ are trace-preservingly homotopy equivalent, then $A\otimes\Zo$ is isomorphic to $B\otimes\Zo$.
\end{theorem}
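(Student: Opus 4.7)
The plan is to derive this theorem as a direct corollary of Theorem~\ref{maintheorem} combined with a two-sided Elliott-type approximate intertwining argument. No further classification input beyond Theorem~\ref{maintheorem} will be required.

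First, I would unpack the hypothesis: there exist $^*$-homomorphisms $\varphi : A \to B$ and $\psi : B \to A$ such that $\psi \circ \varphi$ is trace-preservingly homotopic to $\id_A$ and $\varphi \circ \psi$ is trace-preservingly homotopic to $\id_B$. The very definition of trace-preserving homotopy equivalence forces $T^+(A) \neq \emptyset$ and $T^+(B) \neq \emptyset$, so the tracial hypothesis of Theorem~\ref{maintheorem} is satisfied in both directions. Applying that theorem twice, once with target algebra $A$ (for $\psi \circ \varphi$ and $\id_A$) and once with target algebra $B$ (for $\varphi \circ \psi$ and $\id_B$), yields
\begin{align}
(\psi \otimes \id_{\Zo}) \circ (\varphi \otimes \id_{\Zo}) &\approx_{\mathrm{u}} \id_{A \otimes \Zo}, \notag \\
(\varphi \otimes \id_{\Zo}) \circ (\psi \otimes \id_{\Zo}) &\approx_{\mathrm{u}} \id_{B \otimes \Zo}. \notag
\end{align}

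Next, since $A$, $B$ and $\Zo$ are separable, so are $A \otimes \Zo$ and $B \otimes \Zo$, and hence these approximate unitary equivalences are witnessed by sequences of unitaries in $\mathcal{U}(1 + A \otimes \Zo)$ and $\mathcal{U}(1 + B \otimes \Zo)$ respectively. I would then invoke the standard two-sided Elliott approximate intertwining argument (the same mechanism behind \cite[Corollary 2.3.4]{Rordam} that was already used to upgrade the endomorphism $\tilde{\sigma}$ to the automorphism $\sigma$): choose an exhausting sequence of finite sets and summable tolerances on each side, and alternately perturb $\varphi \otimes \id_{\Zo}$ and $\psi \otimes \id_{\Zo}$ by inner automorphisms so that successive compositions agree with the relevant identities on ever larger subsets. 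Passing to the point-norm limit produces mutually inverse $^*$-homomorphisms between $A \otimes \Zo$ and $B \otimes \Zo$, hence the desired isomorphism.

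Since Theorem~\ref{maintheorem} carries all the substantial content, no serious obstacle remains. The only point requiring a moment's care is to run the intertwining lemma in its non-unital form, with unitaries drawn from the minimal unitisations as in the paper's conventions; this is entirely routine.
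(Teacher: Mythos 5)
Your proposal is correct and follows essentially the same route as the paper: apply Theorem~\ref{maintheorem} to $\psi\circ\varphi$ versus $\id_A$ and to $\varphi\circ\psi$ versus $\id_B$, then invoke the two-sided Elliott approximate intertwining argument to upgrade $\varphi\otimes\id_{\Zo}$ to an isomorphism. The extra remarks you make (that the tracial hypothesis of Theorem~\ref{maintheorem} is automatic from the definition of trace-preserving homotopy equivalence, and that the intertwining must be run in its non-unital form with unitaries in the minimal unitisations) are sound and consistent with the paper's conventions.
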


\begin{proof}
	By assumption there exist $^*$-homomorphisms $\varphi: A \to B$ and $\psi: B \to A$ such that $\psi\circ \varphi$ and $\varphi \circ \psi$ are trace-preservingly homotopic to $\id_A$ and $\id_B$, respectively. 
	By Theorem \ref{maintheorem} it follows that
	\begin{align}
		(\psi \circ \varphi) \otimes \id_{\Zo} \approx_{\mathrm{u}} \id_{A \otimes \Zo} \quad \text{and} \quad (\varphi \circ \psi) \otimes \id_B \approx_{\mathrm{u}} \id_{B \otimes \Zo}. \notag
	\end{align}
	 A standard application of Elliott's intertwining argument yields that $\varphi\otimes\id_\Zo$ is approximately unitarily equivalent to an isomorphism.
	 In particular we obtain $A \otimes \Zo \cong B \otimes \Zo$.
\end{proof}

\begin{remark}
We take a moment to compare Theorem~\ref{theo:class} with the main result of \cite{Schafhauser2024}.
Under the assumption that one has a unital embedding $A\to B$ between separable unital nuclear $\Z$-stable \C-algebras that induces both a $KK$-equivalence and a bijection between the tracial state spaces, it is shown there that $A$ and $B$ are isomorphic.
It is not hard to see (regardless whether $A$ and $B$ are unital or not) that any embedding $\varphi: A\to B$ fitting into a trace-preserving homotopy equivalence must induce both a $KK$-equivalence and a bijection between $T^+(A)$ and $T^+(B)$.
Thus one can interpret our main result as a stably projectionless analog of Schafhauser's theorem, although admittedly a weaker one because the conclusion in Schafhauser's result is reached by merely assuming the existence of a nice embedding in one direction.
Upon assuming a trace-preserving homotopy equivalence, however, one immediately assumes the existence of nice embeddings in both directions, making our assumption conceptually much stronger in comparison.
It is an interesting question if Schafhauser's theorem has a more direct analog for non-unital \C-algebras.
\end{remark}

\newcommand{\etalchar}[1]{$^{#1}$}
\providecommand{\url}[1]{\texttt{#1}}
\providecommand{\urlprefix}{URL }

\end{document}